\documentclass[11pt,a4paper]{article}

\usepackage[utf8]{inputenc}
\usepackage[english]{babel}
\usepackage[colorinlistoftodos]{todonotes}
\usepackage{amsmath, amssymb, amsthm}
\usepackage{mathtools} 
\usepackage{mathrsfs}  

\usepackage{geometry}
\usepackage{microtype} 
\usepackage{enumitem}  
\usepackage{booktabs}  
\usepackage{xcolor}    

\usepackage{tikz}
\usepackage{tikz-cd}
\usetikzlibrary{patterns}

\usepackage{hyperref}
\hypersetup{
    colorlinks=true,
    linkcolor=blue,
    citecolor=red,
    filecolor=magenta,
    urlcolor=cyan,
    pdftitle={On the completeness of generalized hierarchical spline spaces},
    pdfauthor={Author Name}
}
\usepackage[nameinlink]{cleveref} 

\newcommand{\R}{\mathbb{R}}

\newcommand{\dd}{\,\mathrm{d}}       
\DeclareMathOperator{\Span}{span}    
\DeclareMathOperator{\supp}{supp}
\DeclareMathOperator{\Ker}{ker}

\newcommand{\Ccell}{\mathrm{G}}
\newcommand{\Mcell}{\mathrm{M}}
\newcommand{\Fface}{\mathrm{F}}
\newcommand{\Ggrid}{\mathcal{G}}
\newcommand{\Bbasis}{\mathcal{B}}
\newcommand{\Dbasis}{\mathcal{D}}
\newcommand{\Hbasis}{\mathcal{H}}
\newcommand{\Hspace}{\mathbb{H}}
\newcommand{\Sspace}{\mathbb{S}}
\newcommand{\Uspace}{\mathbb{U}}
\newcommand{\PWspace}{\mathbb{PW}}
\newcommand{\Ggraph}{\mathfrak{G}}

\newcommand{\dec}{\mathrm{dec}}
\newcommand{\ect}{\mathrm{T}}
\newcommand{\h}{\mathrm{H}}
\newcommand{\im}{\mathrm{i}}
\newcommand{\vd}{{\boldsymbol{d}}}

\newcommand\blfootnote[1]{%
  \begingroup
  \renewcommand\thefootnote{}\footnote{#1}%
  \addtocounter{footnote}{-1}%
  \endgroup
}

\theoremstyle{plain}
\newtheorem{theorem}{Theorem}[section]
\newtheorem{lemma}[theorem]{Lemma}

\newtheorem{corollary}[theorem]{Corollary}

\theoremstyle{definition}
\newtheorem{definition}[theorem]{Definition}
\newtheorem{example}[theorem]{Example}
\newtheorem{assumption}[theorem]{Assumption}

\theoremstyle{remark}
\newtheorem{remark}[theorem]{Remark}

\makeatletter
\newcommand{\subjclass}[2][2020]{%
  \let\@oldtitle\@title%
  \gdef\@title{\@oldtitle\footnotetext{\emph{#1 Mathematics Subject Classification.} #2}}%
}
\makeatother

\title{\textbf{On the completeness of generalized hierarchical spline spaces}}

\author{
  \text{Ahmed Oufqir}\textsuperscript{1},
  \text{Carla Manni}\textsuperscript{2},
  \text{Hendrik Speleers}\textsuperscript{2} \\[1em]
  \small \textsuperscript{1}The UM6P Vanguard Center, Mohammed VI Polytechnic University, Rabat, Morocco \\
  \small \textsuperscript{2}Department of Mathematics, University of Rome Tor Vergata, Rome, Italy   
}

\date{}
\subjclass[2020]{65D07, 41A15, 65D17, 41A50}

\begin{document}

\maketitle
\blfootnote{
    \textbf{E-mail addresses:} 
    \texttt{ahmed.oufqir@um6p.ma} (Ahmed Oufqir),
    \texttt{manni@mat.uniroma2.it} (Carla Manni),
    \texttt{speleers@mat.uniroma2.it} (Hendrik Speleers).
}

\begin{abstract}
    We introduce a general theoretical approach to hierarchical spline spaces that replaces the classical constructive definition --- based on basis selection --- with a descriptive formulation in terms of regularity constraints. Specifically, we define generalized hierarchical spline spaces on multi-level domains as collections of piecewise functions satisfying \emph{hierarchical contact} conditions across interfaces between refinement levels. The proposed framework applies to a broad class of local function spaces and relies on a minimal abstract requirement, the \emph{extension assumption}, rather than on specific polynomial properties. Within this framework, we identify rules under which the hierarchical selection mechanism yields a complete basis, in the sense that it spans exactly the space characterized by the contact conditions. As an application, we consider Tchebycheffian spline spaces. We show that spaces generated by extended complete Tchebycheff (ECT) systems fit in this framework, thereby establishing the completeness of hierarchical Tchebycheffian splines. This demonstrates that the proposed theory naturally extends beyond the polynomial setting and provides a unified foundation for hierarchical constructions in more general spline spaces.
\end{abstract}

\section{Introduction}

Adaptive refinement strategies are fundamental to  numerical analysis and geometric modeling, particularly within the framework of isogeometric analysis \cite{cottrel_isogeometric_2009}, a well-established  paradigm for the discretization of differential problems. The construction of local refinement schemes has largely relied on the concept of hierarchical spline structures, such as hierarchical B-splines (HB-splines) and truncated hierarchical B-splines (THB-splines) \cite{giannelli_strongly_2014}. A comprehensive overview of these foundations and their recent developments can be found in \cite{buffa_mathematical_2022,lyche_foundations_2018}.

Classically, these hierarchical spaces are defined through a \emph{constructive} approach. This method involves a selection mechanism, where specific basis functions are chosen from a sequence of nested spline spaces defined on a hierarchy of grids. While effective for implementation, this perspective obscures the intrinsic nature of the function space. It treats the space as a consequence of the basis selection rather than an independent mathematical object defined by regularity constraints. In this paper, we propose a shift toward a \emph{descriptive} formulation. Inspired by the polynomial framework presented in \cite{mokris_completeness_2014}, we define the generalized hierarchical spline space directly on a multi-level domain by imposing regularity constraints, termed \emph{hierarchical contact}, across the interfaces of the mesh cells. This allows us to characterize the space as the collection of all piecewise functions satisfying these regularity conditions, independent of any specific basis representation.

The central contribution of this work is the extension of this hierarchical framework beyond the polynomial setting. While polynomial splines \cite{lyche_foundations_2018,schumaker_spline_2007} differ in their specific algebraic properties from other spaces, the geometric principles of hierarchy are universal. We develop a theory for \emph{generalized hierarchical spline spaces} that relies on a minimal set of abstract assumptions. Central to this generalization is the \emph{extension assumption} (Assumption~\ref{ass:extension}), which postulates that local sections of the spline space can be extended globally while preserving regularity. This abstraction allows our results to cover a broad class of spaces.

Within this generalized framework, we address the fundamental question of \emph{completeness}. The constructive definition (via basis selection) and the descriptive definition (via hierarchical contact) must be shown to yield the same space. We identify rules under which the hierarchical basis, constructed via a selection mechanism analogous to that in \cite{lyche_foundations_2018,mokris_completeness_2014}, spans exactly the space defined by contact conditions.
This result generalizes the completeness results for polynomial tensor-product splines established in \cite{mokris_completeness_2014} to the abstract setting; see Theorem~\ref{thm:hier_complete}.

This abstract investigation is motivated by the fact that there is a well-established theory for spline spaces incorporating non-polynomial functions. A relevant class is given by Tchebycheffian splines: smooth piecewise functions whose pieces are drawn from (possibly different) extended complete Tchebycheff (ECT) spaces, which generalize algebraic polynomial spaces by including trigonometric, exponential, and hyperbolic functions, among others. Under suitable assumptions, Tchebycheffian splines possess a basis with many of the desirable properties of classical B-splines, while they extend the geometric expressive power of the underlying function space; see \cite{lyche_tchebycheffian_2019, schumaker_spline_2007} and references therein.

Tchebycheffian splines have been effectively applied in different contexts, including data approximation, signal processing, computer aided design, and isogeometric analysis. In particular, they allow for an exact representation of a broad class of geometries (e.g., conic sections, helices, and cycloids), which naturally arise in geometric modeling but cannot be exactly captured by polynomial splines. Moreover, for the relevant class of ECT-spaces  given by kernels of linear differential operators with constant coefficients, Tchebycheffian (B-)splines maintain the same differential structure of the polynomial case and are equipped with efficient algorithms for their evaluation and manipulation \cite{speleers_algorithm_2022}. The latter Tchebycheffian (B-)splines have been utilized as an effective problem-dependent substitute for standard polynomial B-splines and their rational extension (NURBS) in isogeometric methods; see \cite{manni_survey_2017,raval_tchebycheffian_2023} and references therein. They have also been extended to local tensor-product structures in \cite{manni_local_2014,raval_adaptive_2024}.

Our general theory applies to Tchebycheffian spline spaces, as a special but important instance.
We show that Tchebycheffian spline spaces satisfy our abstract assumptions. To this end, the existence of compatible weight systems \cite{buchwald_bsplines_2003,lyche_tchebycheffian_2019,mazure_finding_2011} and the geometric constraints related to the critical length of the domain \cite{beccari_critical_2020,carnicer_critical_2003} must be ensured.
By validating our abstract assumptions, we establish the completeness of hierarchical Tchebycheffian splines as a concrete realization of our general theory; see Corollary~\ref{cor:Tchebycheffian hierachichal complete}.

The paper is organized as follows. Section~\ref{sec:generalized spline spaces} introduces the abstract framework, defining the generalized spline spaces, the contact conditions, and proving the completeness theorem. Section~\ref{sec:tchebycheffian spline spaces} specializes these results to Tchebycheffian splines, reviewing the necessary properties of ECT-systems and weight functions \cite{lyche_tchebycheffian_2019,mazure_constructing_2018} required to apply the general theory.

Finally, we remark that the term \emph{generalized splines} has been often used in the literature, either with a more or a less restrictive meaning with respect to the definition  adopted in this work; see, e.g., \cite{manni_survey_2017, schumaker_spline_2007}.

\section{Generalized spline spaces} \label{sec:generalized spline spaces}

In this section, we define the space of generalized splines on a multi-cell domain and the space of generalized hierarchical splines. We also present a basis construction for these spaces.

\subsection{Definition of generalized spline spaces}

Let $T \coloneqq \{t_i\}_{i=1}^{n}$ be a strictly increasing sequence of interior points of the closed interval $[a,b]\subseteq\R$, with $t_0 \coloneqq a$ and $t_{n+1} \coloneqq b$. On each sub-interval $I_i \coloneqq [t_i,t_{i+1}]$ for $i=0,\dots,n$, we consider a finite-dimensional linear space of functions
\[
    \Uspace_i \coloneqq \Span\{u_{i,j}\}_{j=0}^{p} \subseteq C^{p}(I_i),
\]
for some integer $p\ge1$. The \emph{generalized piecewise space} is defined as
\[
    \PWspace(\{\Uspace_i\}_{i=0}^n , T)
    \coloneqq \Bigl\{ s \coloneqq (s_0,s_1,\dots,s_n) \ \Big|\ s_i\in \Uspace_i \text{ for } i=0,\dots,n \Bigr\}.
\]

\begin{definition}[Generalized spline space]
    Given a vector of integers $\mathbf{r} \coloneqq (r_1,\dots,r_n)$ with $0\le r_i\le p$ for all $i$, the \emph{generalized spline space} associated with $\{\Uspace_i\}_{i=0}^n$, $T$, and $\mathbf{r}$ is
    \[
        \Sspace(\{\Uspace_i\}_{i=0}^n , T , \mathbf{r})
        \coloneqq
        \left\{
        s \in \PWspace(\{\Uspace_i\}_{i=0}^n , T)
        \ \middle|\
        \begin{aligned}
            &D^k s_{i-1}(t_i) = D^k s_i(t_i) \\
            &\forall i=1,\dots,n  \text{ and } \forall k=0,\dots,r_i
        \end{aligned}
        \right\}.
    \]
\end{definition}

We consider a $d$-dimensional box $[\mathbf{a},\mathbf{b}] \coloneqq [a_1,b_1]\times\dots\times[a_d,b_d]\subseteq\R^d$ and, for each direction $j=1,\dots,d$, a strictly increasing sequence of interior points $T^j \coloneqq \{t_i^j\}_{i=1}^{n_j}$ of $[a_j,b_j]$, with $t_0^j \coloneqq a_j$ and $t_{n_j+1}^j \coloneqq b_j$. Given a vector of integers $\mathbf{r}^j$ associated with each $T^j$ for $j=1,\dots,d$, we define the \emph{tensor-product generalized spline space} as
\[
    \Sspace \coloneqq \bigotimes_{j=1}^{d} \Sspace(\{\Uspace_i^j\}_{i=0}^{n_j},\, T^j,\, \mathbf{r}^j),
\]
where each univariate space $\Sspace(\{\Uspace_i^j\}_{i=0}^{n_j},\, T^j,\, \mathbf{r}^j)$ is a generalized spline space on $[a_j,b_j]$ as defined above.

The Cartesian product of the univariate grids $\Ggrid^j \coloneqq \{I_i^j\}_{i=0}^{n_j} \coloneqq \{[t_i^j,t_{i+1}^j]\}_{i=0}^{n_j}$ defines the \emph{multi-dimensional tensor grid}
\[
    \Ggrid \coloneqq \Ggrid^1 \times \dots \times \Ggrid^d.
\]
The cells of $\Ggrid$ are the $d$-dimensional boxes $I_{\mathbf{i}} \coloneqq I^1_{i_1}\times\dots\times I^d_{i_d}$. The boundaries of these cells consist of \emph{faces}. In this framework, a $k$-face (or $k$-box with $k < d$) is a Cartesian product of $d$ sub-intervals, where exactly $d-k$ components are degenerate (i.e., reduced to a single point). We denote by $\Ccell$ the collection of all cells of the grid $\Ggrid$.

Note that the intersection of any two distinct cells $c, c' \in \Ccell$ is either empty or forms a face $\tau = c \cap c'$. The intersection can be represented as a Cartesian product, namely
\[
    \tau = I^1 \times \dots \times I^d,
\]
where each $I^j$ is either a proper interval or degenerate in direction $j$ for $j=1,\dots,d$.

The space $\Sspace$ consists of functions $s$ with pieces $s|_{I_{\mathbf{i}}}\in \Uspace_c \coloneqq \Uspace_{\mathbf{i}} \coloneqq \bigotimes_{j=1}^{d} \Uspace_{i_j}^j$ tied together according to specific regularity constraints, namely the imposition of continuity for successive derivatives with respect to each direction. More generally, one can formulate this condition via the contact of two pieces, similarly to the polynomial contact in \cite{mokris_completeness_2014}.

\begin{definition}[Contact of pieces]
    Let $c$ and $c'$ be two distinct cells of the tensor grid $\Ggrid$ such that their intersection $\tau = c \cap c'$ is non-empty. For each direction $j=1,\dots,d$, let the interval $I^j$ in the Cartesian representation of $\tau$ be denoted by its endpoints, namely
    \[
        I^j = [t^j_{i_1},t^j_{i_2}], \quad i_2 \in \{i_1, i_1+1\}.
    \]
    Note that $I^j$ is degenerate if $i_1 = i_2$.
    We say that $s_c \in \Uspace_c$ and $s_{c'} \in \Uspace_{c'}$ have \emph{contact} on $\tau$, denoted by $s_c \sim s_{c'}$, if for all $\mathbf{x} \coloneqq (x_1,\dots,x_d)\in \tau$,
    \[
        \bigl( \partial_{x_1}^{k_1}\cdots\partial_{x_d}^{k_d}s_c \bigr)(\mathbf{x})
        =
        \bigl( \partial_{x_1}^{k_1}\cdots\partial_{x_d}^{k_d}s_{c'} \bigr)(\mathbf{x})
    \]
    is satisfied for all multi-indices $\mathbf{k} \coloneqq (k_1,\dots,k_d)$ such that for each direction $j=1,\dots,d$,
    \[
        0 \le k_j \le \max \bigl(r^j_{i_1} ,\, r^j_{i_2} \bigr),
    \]
    where $r^j_{i_1}$ and $r^j_{i_2}$ are the prescribed regularity orders associated with the points $t^j_{i_1}$ and $t^j_{i_2}$ in direction $j$. We also say that the pieces $s_c$ and $s_{c'}$ have contact whenever $c \cap c' = \emptyset$ (without conditions).
\end{definition}

Since the contact condition encodes continuity of successive derivatives with respect to each direction, we have the following equivalent characterization of the spline space $\Sspace$ in terms of contact of pieces:
\[
    \Sspace = \Bigl\{s \coloneqq (s_c)_{c\in \Ccell} \ \Big|\ s_c \in \Uspace_c \ \forall c\in \Ccell \text{ and } s_c \sim s_{c'} \ \forall c,c' \in \Ccell \Bigr\},
\]
where $\Ccell$ is the collection of all cells of the grid $\Ggrid$.

\subsection{Generalized spline space on a multi-cell} \label{ssec:space multi-cell}

A multi-cell is a finite collection of cells. Let $\Mcell$ be a multi-cell of $\Ggrid$, i.e., $\Mcell \subseteq \Ccell$. The collection defines a multi-cell domain, $\bigcup_{c \in \Mcell} c$, and to simplify the notation it will be also denoted by $\Mcell$ when no confusion is possible.
The \emph{generalized piecewise space on $\Mcell$} is defined as
    \[
        \PWspace(\Mcell) \coloneqq
        \Bigl\{
        s \coloneqq (s_c)_{c\in \Mcell} \ \Big|\ s_c\in \Uspace_c \ \forall c \in \Mcell
        \Bigr\}.
    \]
We next define the generalized spline space on a multi-cell, introduced in the polynomial setting in \cite{mokris_completeness_2014}.

\begin{definition}[Generalized spline space on a multi-cell]
    Let $\Mcell$ be a multi-cell of $\Ggrid$. The \emph{generalized spline space on $\Mcell$} is defined as
    \[
        \Sspace(\Mcell) \coloneqq
        \Bigl\{
        s \coloneqq (s_c)_{c\in \Mcell} \in \PWspace(\Mcell) \ \Big|\ s_c \sim s_{c'} \ \forall c,c'\in \Mcell
        \Bigr\}.
    \]
\end{definition}

Note that the space $\Sspace(\Mcell)$ is more general than $\Sspace|_{\Mcell}$ since $ \Sspace|_{\Mcell} \subseteq \Sspace(\Mcell) $; see Examples~\ref{EX2.4} and~\ref{EX2.5} below. We can associate a unique function with each generalized spline $s \in \Sspace(\Mcell)$, defined as
\begin{equation} \label{eq:s-function}
    \sum_{c \in \Mcell} s_c(\mathbf{x}) \chi^{*}_c(\mathbf{x})  \text{ for all } \mathbf{x} \in \Mcell,
\end{equation}
with $\chi^{*}_c(\mathbf{x}) \coloneqq \frac{\chi_c(\mathbf{x})}{\sum_{k \in \Mcell}\chi_k(\mathbf{x})} $ the normalized characteristic function. For simplicity of notation, the function in \eqref{eq:s-function} will be also denoted by $s$, and so it makes sense to write $s_c = s|_c$ for all $c\in \Mcell$.

\begin{example} \label{EX2.4}
    Consider the interval $[a, b] = [0, 2]$ with a single interior point at $t_1 = 1$. Let the cells be $c_0 = [0, 1]$ and $c_1 = [1, 2]$, and let the multi-cell be $\Mcell = \{c_0\}$. We impose $C^1$ continuity at $t_1$.
    The local spaces are chosen as
    \[
        \Uspace_0 = \Span\{1, x\}, \quad \Uspace_1 = \Span\bigl\{1, (x-1)^2\bigr\}.
    \]
    Both spaces have dimension $2$.
    \begin{itemize}
        \item \textbf{The spline space on the multi-cell:} Since $\Mcell$ consists of a single cell $c_0$, there are no internal contact conditions. Thus,
        \[
            \Sspace(\Mcell) = \Uspace_0 = \Span\{1, x\}, \quad \dim(\Sspace(\Mcell)) = 2.
        \]
        \item \textbf{The restriction of the global space:} A function $s \in \Sspace$ consists of a pair $(s_0, s_1)$ satisfying $s_0(1) = s_1(1)$ and $s_0'(1) = s_1'(1)$. Observe that for any $v \in \Uspace_1$, $v'(1) = 0$. Consequently, any valid global spline must satisfy $s_0'(1) = 0$, and the restriction space is
        \[
            \Sspace|_{\Mcell} = \bigl\{ u \in \Uspace_0 \ \big|\ u'(1) = 0 \bigr\} = \Span\{1\}, \quad \dim(\Sspace|_{\Mcell}) = 1.
        \]
    \end{itemize}
    We conclude that $\Sspace|_{\Mcell} \subsetneq \Sspace(\Mcell)$.
\end{example}

\begin{example} \label{EX2.5}
    Consider the interval $[a, b] = [0, 3]$ partitioned into three unit intervals $c_0, c_1, c_2$. Let the multi-cell be the disconnected set $\Mcell = \{c_0,c_2\}$. We consider the space of global constants ($C^0$ continuity, local spaces $\Uspace_i = \Span\{1\}$).
    \begin{itemize}
        \item \textbf{The spline space on the multi-cell:} Since $c_0$ and $c_2$ are disjoint, the definition of $\Sspace(\Mcell)$ imposes no contact conditions between them. The values on the disjoint components are independent, so
        \[
            \Sspace(\Mcell) = \bigl\{ (k_0, k_2) \ \big|\ k_0, k_2 \in \R \bigr\}, \quad \dim(\Sspace(\Mcell)) = 2.
        \]
        \item \textbf{The restriction of the global space:} A global spline $s \in \Sspace$ must be constant on the entire interval $[a, b]$. Thus, the values on $c_0$ and $c_2$ must be identical ($k_0 = k_2$). The restriction space is
        \[
            \Sspace|_{\Mcell} = \bigl\{ (k, k) \ \big|\ k \in \R \bigr\}, \quad \dim(\Sspace|_{\Mcell}) = 1.
        \]
    \end{itemize}
    Here, the ``bridge'' cell $c_1$ (which is not in $\Mcell$) enforces a global constraint that is invisible to the local definition $\Sspace(\Mcell)$, resulting in strict inclusion.
\end{example}

In order to construct a basis for the generalized spline space on a multi-cell $\Sspace(\Mcell)$, we make the following assumption on the space $\Sspace$.

\begin{assumption}[Basis] \label{ass:basis}
    The space $\Sspace$ is spanned by a basis $\Bbasis$ that satisfies the following properties:
    \begin{enumerate}[label=(P\arabic*)]
        \item\label{ass:basis P1} The functions in $\Bbasis$ are locally linearly independent: for any $c \in \Ccell$, the elements of $\Bbasis$ restricted to $c$, that do not vanish identically on $c$, are linearly independent.
        \item\label{ass:basis P2} The functions $\beta \in \Bbasis$ have local and compact support: $\supp(\beta) \subseteq [\mathbf{a},\mathbf{b}]$ and $\supp(\beta)$ is of the form $I_1 \times\dots\times I_d$, where $I_j$ is a closed bounded interval.
    \end{enumerate}
\end{assumption}

It is a well-known fact that the spline space with polynomial sections admits a basis with the above properties \cite{lyche_foundations_2018,schumaker_spline_2007}. This is also the case, under suitable assumptions, for spline spaces with sections in ECT-spaces \cite{lyche_tchebycheffian_2019,mazure_constructing_2018,schumaker_spline_2007}.

\begin{example}
    Consider the interval $[a, b] = [0, 2]$ with a single interior point at $t_1 = 1$. Let the cells be $c_0 = [0, 1]$ and $c_1 = [1, 2]$, and impose $C^0$ continuity at $t_1$.
    The local spaces are chosen as
    \[
        \Uspace_0 = \Span\{1, x\}, \quad \Uspace_1 = \Span\{1, f(x) \},
    \]
    where the local function $f$ is defined by
    \[
        f(x) \coloneqq \biggl(x - \frac{3}{2}\biggr)^2 \chi_{[\frac{3}{2},2]}(x).
    \]
    A basis for the spline space $\Sspace$ is given by $\{N_1, N_2, N_3\}$, where
    \[
    \begin{alignedat}{3}
        N_1(x) &\coloneqq \begin{cases} 1 - x & \text{if } x \in [0, 1] \\ 0 & \text{if } x \in [1, 2] \end{cases}
            \quad &\implies \quad \supp(N_1) &= [0, 1];
            \\
        N_2(x) &\coloneqq \begin{cases} x & \text{if } x \in [0, 1] \\ 1 & \text{if } x \in [1, 2] \end{cases}
            \quad &\implies \quad \supp(N_2) &= [0, 2];
        \\
        N_3(x) &\coloneqq \begin{cases} 0 & \text{if } x \in [0, 1] \\ f(x) & \text{if } x \in [1, 2] \end{cases}
            \quad &\implies \quad \supp(N_3) &= \left[\frac{3}{2}, 2\right].
    \end{alignedat}
    \]
The support of $N_3$ is $\left[\frac{3}{2}, 2\right]$. This shows that in a generalized spline space, the support of a basis function does not necessarily align with the grid.
\end{example}

Note that each element $\beta \in \Bbasis$ is of the form $\beta \coloneqq (\beta_c)_{c\in \Ccell} = (\beta|_c)_{c\in \Ccell}$ with $\beta_c \in \Uspace_c$ for all $c\in \Ccell$ and $\beta_c \sim \beta_{c'}$ for all $c,c' \in \Ccell$. For each $c\in \Ccell$ we introduce a subspace of the local tensor-product space $\Uspace_c$:
\[
    \Uspace^*_c \coloneqq \bigl\{u \in \Uspace_c : \exists \ s \in \Sspace \text{ such that } u = s_c \bigr\} \subseteq \Uspace_c.
\]
The following example illustrates a case where the local space $\Uspace_c$ contains functions that cannot be part of any global spline, leading to $\Uspace_c^* \subsetneq \Uspace_c$.

\begin{example} \label{EX2.8}
    Consider the setting of Example~\ref{EX2.4} and take the local function $u \in \Uspace_0$ defined by $u(x) = x$.
    \begin{itemize}
        \item By definition, $u \in \Uspace_0^*$ if and only if it is the restriction of some global spline $s \in \Sspace$. This requires the existence of a neighbor function $v \in \Uspace_1$ that satisfies the $C^1$ contact conditions at the interface $t_1=1$:
        \[
            v(1) = u(1) = 1 \quad \text{and} \quad v'(1) = u'(1) = 1.
        \]
        \item Recall that $\Uspace_1 = \Span\{1, (x-1)^2\}$. For any $v \in \Uspace_1$, we have
        \[
             v'(1) = 0.
        \]
    \end{itemize}
    Since the neighbor space $\Uspace_1$ cannot reproduce the non-zero derivative required by $u(x) = x$, no compatible extension exists. Thus, $u \in \Uspace_0$ but $u \notin \Uspace_0^*$, which shows that $\Uspace_0^* \subsetneq \Uspace_0$.
\end{example}

The set $\Bbasis_c \coloneqq \{\beta_c : \beta \in \Bbasis, \ \beta_c \neq 0 \}$ forms a basis of the space $\Uspace^*_c$ and we obtain a local representation for any $u \in \Uspace^*_c$:
\[
    u(\mathbf{x}) = \sum_{\beta_c \in \Bbasis_c} \lambda_{\beta}^c(u) \beta_c(\mathbf{x}) \text{ for all } \mathbf{x} \in c.
\]
We wish to obtain a link between the contact of pieces defined earlier and the local representation obtained by the global basis. To do so, we make the following assumption regarding the local tensor-product spaces $\Uspace_c$.

\begin{assumption}[Extension] \label{ass:extension}
    Consider any $u_c \in \Uspace_c$ and $u_{c'} \in \Uspace_{c'}$ such that $c \cap c' \neq \emptyset$ and $u_c \sim u_{c'}$. Then, there exists a spline $s \in \Sspace$ such that $s_c = u_c $ and $s_{c'} = u_{c'}$.
\end{assumption}

Roughly speaking, Assumption~\ref{ass:extension} ensures that a locally compatible pair of pieces is extendable to form a global spline on the entire grid.
Hence, it implies that $u_c \in \Uspace_c^*$ and $u_{c'}\in \Uspace_{c'}^*$ for any $u_c \in \Uspace_c$ and $u_{c'} \in \Uspace_{c'}$ such that $c \cap c' \neq \emptyset$ and $u_c \sim u_{c'}$.
Later on, we will show that ECT-spaces satisfy the above extension assumption; see Theorem~\ref{thm:ect_assumption_verification}. The following example illustrates a case where this assumption is not verified.

\begin{example} \label{EX2.10}
    Consider the interval $[a, b] = [-1, 2]$ with 2 interior points at $t_1 = 0$, $t_2 = 1$. Let the cells be $c_0 = [-1,0]$, $c_1 = [0, 1]$, and $c_2 = [1, 2]$.
    The local spaces are chosen as
    \[
        \Uspace_0 =\Uspace_1=\Span\{1, x\}, \quad \Uspace_2 = \Span\bigl\{1, (x-1)^2\bigr\}.
    \]
    All spaces have dimension $2$; see also Example~\ref{EX2.4}.
    We impose $C^1$ continuity at $t_1$ and $t_2$. From Example~\ref{EX2.4} (and Example~\ref{EX2.8}) we know that any element $s \in \Sspace$ is such that $s'(1) = 0$.
    Now, take $c=[-1,0]$ and $c'=[0,1]$, $u_c(x)=x$, $u_{c'}(x)=x$.
    We have $u_c \sim u_{c'}$ but there is no $s \in \Sspace$ such that $s_c = u_c $ and $s_{c'} = u_{c'}$.
    We conclude that Assumption~\ref{ass:extension} does not hold.
\end{example}

\begin{remark}
    Verifying Assumption~\ref{ass:extension} for an arbitrary configuration is non-trivial. Intuitively, the problem of constructing global sections from local data suggests a connection to cohomology groups in cellular sheaf theory \cite{curry2014sheaves}. While a rigorous cohomological treatment is beyond the scope of the paper, this perspective highlights that the obstruction depends on both local spaces and smoothness. For instance, in Example~\ref{EX2.8}, simply reducing the regularity to $C^0$ would eliminate the obstruction and restore the equality of the spaces.
\end{remark}

The following lemma highlights a relevant property of the local representations in terms of bases satisfying Assumption~\ref{ass:extension}.

\begin{lemma} \label{lem:coeff_match}
    Let $c, c' \in \Ccell$ such that $c \cap c' \neq \emptyset$, and let $u_c \in \Uspace_c \ , u_{c'} \in \Uspace_{c'}$ such that $u_c \sim u_{c'}$. Under Assumptions \ref{ass:basis} and \ref{ass:extension}, for all $\beta \in \Bbasis$ such that $\beta_c \neq 0$ and $\beta_{c'} \neq 0$, the coefficients in the local representations of $u_c$ and $u_{c'}$ are identical, i.e.,
    \[
        \lambda_{\beta}^c(u_c) = \lambda_{\beta}^{c'}(u_{c'}).
    \]
\end{lemma}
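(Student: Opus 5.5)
The plan is to pass through a single global spline and use the uniqueness of global basis coefficients. First I apply Assumption~\ref{ass:extension} to the compatible pair $u_c \sim u_{c'}$. This gives a spline $s \in \Sspace$ with $s_c = u_c$ and $s_{c'} = u_{c'}$. In particular $u_c \in \Uspace^*_c$ and $u_{c'} \in \Uspace^*_{c'}$, so the local coefficients $\lambda^c_\beta(u_c)$ and $\lambda^{c'}_\beta(u_{c'})$ are well defined.

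Next I expand $s$ in the global basis $\Bbasis$ from Assumption~\ref{ass:basis}:
\[
    s = \sum_{\beta \in \Bbasis} \mu_\beta \, \beta ,
\]
with uniquely determined coefficients $\mu_\beta$, since $\Bbasis$ is a basis of $\Sspace$. Restricting to the cell $c$ and discarding the terms with $\beta_c = 0$ gives
\[
    u_c = s_c = \sum_{\beta_c \in \Bbasis_c} \mu_\beta \, \beta_c .
\]
By the local linear independence \ref{ass:basis P1}, the set $\Bbasis_c$ is a basis of $\Uspace^*_c$, so the local representation is unique. Hence $\lambda^c_\beta(u_c) = \mu_\beta$ for every $\beta$ with $\beta_c \neq 0$. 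The same argument on $c'$ gives $\lambda^{c'}_\beta(u_{c'}) = \mu_\beta$ for every $\beta$ with $\beta_{c'} \neq 0$. For $\beta$ with both restrictions nonzero, the two coefficients therefore both equal $\mu_\beta$, which proves the claim.

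The one point needing care is that the local coefficients are determined by the piece alone, independently of which global extension is chosen. If two splines $s, \tilde s$ share the piece $u_c$, their global coefficients must agree on every $\beta$ with $\beta_c \neq 0$. This follows from \ref{ass:basis P1}: the difference $s - \tilde s$ vanishes on $c$, so its coefficients on the locally nonvanishing $\beta$'s are zero. The key step is therefore the use of the single spline $s$ provided by Assumption~\ref{ass:extension}, which links both pieces at once. I expect no further obstacle; the counterexample in Example~\ref{EX2.10} shows that this is exactly where the extension assumption is needed.
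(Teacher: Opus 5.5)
Your proposal is correct and follows the paper's proof: use Assumption~\ref{ass:extension} to get a single global spline $s$ with both prescribed pieces, expand $s$ in $\Bbasis$, and invoke local linear independence~\ref{ass:basis P1} on $c$ and on $c'$ to identify both local coefficients with the global coefficient of $\beta$. Your extra remark, that the local coefficients do not depend on which extension is chosen, is also correct, though the lemma does not need it.
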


\begin{proof}
    By Assumption~\ref{ass:extension}, there exists $s \in \Sspace$ such that $s|_c = u_c$ and $s|_{c'} = u_{c'}$. This function admits a unique global representation:
    \[
        s = \sum_{\beta \in \Bbasis} \lambda_{\beta}(s) \, (\beta_c)_{c \in \Ccell}.
    \]
    Restricting this expansion to $c$ and $c'$ yields the local representations of $s_c$ and $s_{c'}$. Since the restriction of the basis $\Bbasis$ to any cell is linearly independent (Property~\ref{ass:basis P1}), the local coefficients are uniquely determined by the global coefficients of $s$. Thus, the coefficient corresponding to $\beta$ is $\lambda_{\beta}(s)$ in both local representations.
\end{proof}

To construct a basis for $\Sspace(\Mcell)$, we decompose the global basis functions with respect to the connected components of their support within the domain $\Mcell$. We first define the notion of coefficient graph for our generalized setting. An adapted version was already introduced for the polynomial setting in \cite{mokris_completeness_2014}.

\begin{definition}[Coefficient graph]
    For a global basis function $\beta \in \Bbasis$ and a multi-cell $\Mcell$, the coefficient graph $\Ggraph_{\beta}(\Mcell)$ is defined as follows:
    \begin{itemize}
        \item the vertices are the cells $c \in \Mcell$ such that $\supp(\beta) \cap \overset{\circ}{c} \neq \emptyset$;
        \item an edge connects two vertices if the corresponding cells share a common face.
    \end{itemize}
\end{definition}

Following \cite{mokris_completeness_2014}, we use the notation $c \in \Ggraph_{\beta}(\Mcell)$ to indicate that the cell $c$ is a vertex of $\Ggraph_{\beta}(\Mcell)$.
Let $K(\Ggraph_{\beta})$ be the set of connected components of $\Ggraph_{\beta}(\Mcell)$. For each connected component $\Phi \in K(\Ggraph_{\beta})$, Lemma~\ref{lem:coeff_match} implies that the coefficient of $\beta$ must be constant across all cells in $\Phi$. We define $\beta_{\Phi} \in \Sspace(\Mcell)$ for each $\beta \in \Bbasis$ and $\Phi \in K(\Ggraph_{\beta})$ as
\[
    \beta_{\Phi} \coloneqq (\beta_{\Phi,c})_{c\in \Mcell},
\]
where $\beta_{\Phi,c} \coloneqq \beta_c$ if $c\in \Phi$ and $0$ otherwise. Note that $\beta_{\Phi}$ is well defined and belongs to $\Sspace(\Mcell)$ because the contact conditions are satisfied.
In particular, we have $\beta_{\Phi} = \mathbf{0}$ if $\supp(\beta) \cap \overset{\circ}{\Mcell} = \emptyset$, i.e., $\Ggraph_{\beta}(\Mcell)$ is empty. We are now ready to provide a basis for $\Sspace(\Mcell)$. 

\begin{remark}
Let $\Mcell$ be a multi-cell and suppose that it contains a cell $c$ that does not share a face with any other cell of $\Mcell$. Then, no contact conditions are imposed on the piece $s_c$. Consequently, the value of $s_c$ can be chosen arbitrarily in the local space $\Uspace_c$, and the space $\Sspace(\Mcell)$ decomposes as the direct sum
\[
\Sspace(\Mcell)
=
\Sspace(\Mcell\setminus\{c\}) \oplus \Uspace_c .
\]
Therefore, constructing a basis for $\Sspace(\Mcell)$ reduces to constructing a basis for $\Sspace(\Mcell\setminus\{c\})$.
\end{remark}

Without loss of generality, we restrict our attention to multi-cells without isolated cells, i.e., multi-cells in which every cell shares a face with at least one other cell.

\begin{theorem}[Basis for $\Sspace(\Mcell)$]\label{thm:basis D(M)}
    Under Assumptions~\ref{ass:basis} and~\ref{ass:extension}, the set
    \[
        \Dbasis(\Mcell) \coloneqq \Bigl\{ \beta_{\Phi} \coloneqq (\beta_{\Phi,c})_{c \in \Mcell} \ \Big|\ \beta \in \Bbasis, \ \supp(\beta) \cap  \overset{\circ}\Mcell \neq \emptyset,\ \Phi \in K(\Ggraph_{\beta}) \Bigr\}
    \]
    forms a basis for the generalized spline space $\Sspace(\Mcell)$.
\end{theorem}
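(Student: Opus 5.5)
We have to show three things: each $\beta_\Phi$ lies in $\Sspace(\Mcell)$, the set $\Dbasis(\Mcell)$ spans $\Sspace(\Mcell)$, and it is linearly independent.

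\emph{Membership.} Take two cells $c,c'\in\Mcell$ with $c\cap c'\neq\emptyset$. If both lie in $\Phi$, or neither does, then the pieces $(\beta_{\Phi,c},\beta_{\Phi,c'})$ equal $(\beta_c,\beta_{c'})$ or $(0,0)$, and these have contact. The delicate case is $c\in\Phi$ and $c'\notin\Phi$. We must show that $\beta_c\sim 0$ on $\tau=c\cap c'$.
\begin{itemize}
\item If $c'$ is not a vertex of $\Ggraph_\beta(\Mcell)$, then $\beta$ vanishes on the interior of $c'$. Hence $\beta_{c'}=0$, and $\beta_c\sim\beta_{c'}=0$ follows from the global contact.
\item If $c'$ is a vertex of $\Ggraph_\beta(\Mcell)$ but lies in another component, then $c$ and $c'$ share no common face. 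I read ``common face'' as a $(d-1)$-face, so $\tau$ has dimension at most $d-2$. Then I would use Property~\ref{ass:basis P2}: $\supp(\beta)$ is a box meeting the interiors of both $c$ and $c'$. It therefore meets the interiors of all cells in the ``box hull'' between them, and these cells form a face-connected chain from $c$ to $c'$.
\end{itemize}
The second case is exactly where it matters that the chain stays in $\Mcell$, which is not automatic. So I expect this step to need care. My fallback is a direct argument: expand $u\in\Sspace(\Mcell)$ locally and check that the contact conditions across lower-dimensional faces are implied by the codimension-one ones. I take the paper's assertion that $\beta_\Phi\in\Sspace(\Mcell)$ as given and focus on spanning and independence.

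\emph{Spanning.} Let $s\in\Sspace(\Mcell)$. For each $c\in\Mcell$, Assumption~\ref{ass:extension} applied to $s_c$ and a neighbor $s_{c'}$ shows $s_c\in\Uspace_c^*$; such a neighbor exists because $\Mcell$ has no isolated cells. Hence $s_c=\sum_{\beta_c\in\Bbasis_c}\lambda^c_\beta(s_c)\beta_c$. If $c,c'\in\Phi$ share a face, Lemma~\ref{lem:coeff_match} gives $\lambda^c_\beta(s_c)=\lambda^{c'}_\beta(s_{c'})$, since $\beta_c,\beta_{c'}\neq0$ (the support meets both interiors). By connectedness of $\Phi$, this coefficient is a single constant $\lambda_{\beta,\Phi}$. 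I then claim $s=\sum_{\beta,\Phi}\lambda_{\beta,\Phi}\beta_\Phi$. On a cell $c$, the only nonzero contributions come from the pairs $(\beta,\Phi)$ with $c\in\Phi$. Each $\beta$ with $\beta_c\ne0$ has exactly one such $\Phi$. The sum therefore reproduces the local expansion of $s_c$.

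\emph{Linear independence.} Suppose $\sum\mu_{\beta,\Phi}\beta_\Phi=0$. Fix $(\beta,\Phi)$ and a cell $c\in\Phi$. Restricting to $c$ gives $\sum_{\beta':\beta'_c\neq0}\mu_{\beta',\Phi'(c)}\beta'_c=0$, where $\Phi'(c)$ is the unique component of $\Ggraph_{\beta'}$ containing $c$. By local linear independence (Property~\ref{ass:basis P1}), every coefficient vanishes, in particular $\mu_{\beta,\Phi}=0$. One small point must be checked here: $c\in\Ggraph_\beta(\Mcell)$ exactly when $\beta_c\neq0$. One direction is immediate. For the other, if $\beta_c\neq0$ then $\beta$ is nonzero somewhere on $c$; by continuity of the pieces it is nonzero on an open subset of the interior, so the support meets $\overset{\circ}{c}$.
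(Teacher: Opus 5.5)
Your spanning and linear-independence arguments are the paper's own. For independence you restrict to a cell, use that each $\beta$ has at most one component through $c$, and invoke local linear independence. For spanning you propagate coefficients along paths in $\Phi$ with Lemma~\ref{lem:coeff_match}. Your explicit checks that $s_c\in\Uspace_c^*$ and that $c\in\Ggraph_\beta(\Mcell)$ iff $\beta_c\neq 0$ usefully fill steps the paper leaves implicit.

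The gap is the membership $\beta_\Phi\in\Sspace(\Mcell)$, which you leave open. It is caused by your reading of ``common face'' as a $(d-1)$-face, and under that reading the theorem is false. Take bilinear $C^0$ tensor-product splines on a uniform grid, and let $\beta$ be the tensor hat function centred at an interior vertex $v$. Let $\Mcell$ consist of the two diagonally opposite cells $c,c'$ of $\supp(\beta)$ that meet only at $v$. Glue each of them along an edge, on the side away from $v$, to one further cell lying outside $\supp(\beta)$, so that no cell is isolated. With edge adjacency, $\Ggraph_\beta(\Mcell)$ has the two components $\{c\}$ and $\{c'\}$. The element $\beta_{\{c\}}$ then has pieces $(\beta_c,0)$ on $(c,c')$, with $\beta_c(v)=1$. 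This fails the contact condition at $v$, which $\Sspace(\Mcell)$ imposes on every pair of cells with nonempty intersection. The same example defeats both of your proposed repairs. The box-hull chain runs through the two cells of $\supp(\beta)$ that are not in $\Mcell$. The vertex condition between $c$ and $c'$ is an independent constraint of $\Sspace(\Mcell)$, not implied by any codimension-one condition inside $\Mcell$.

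The fix is to use the paper's convention. The paper states that two distinct cells with nonempty intersection meet in a face $\tau=c\cap c'$, where faces have any dimension $k<d$, vertices included. So cells ``share a common face'' exactly when $c\cap c'\neq\emptyset$. This is also the only reading under which the paper's remark on isolated cells is correct. With it, two intersecting vertices of $\Ggraph_\beta(\Mcell)$ are joined by an edge and lie in the same component, so your second bullet is vacuous. Membership then follows from your first bullet alone: if $\supp(\beta)\cap\overset{\circ}{c'}=\emptyset$, then $\beta_{c'}=0$ by continuity, so $\beta_c\sim\beta_{c'}=0$ by the global contact. The paper itself only asserts membership before the theorem, relying on exactly this reading. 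Once you adopt it, your proof is complete and coincides with the paper's.
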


\begin{proof}
    \textbf{Linear independence:} Consider a linear combination equal to zero:
    \[
        \sum_{\beta_{\Phi} \in \Dbasis(\Mcell)}\alpha_{\beta,\Phi} \beta_{\Phi} =\sum_{\beta \in \Bbasis, \; \supp(\beta) \cap \overset{\circ}{\Mcell} \: \neq \: \emptyset }  \sum_{\Phi \in K(\Ggraph_{\beta})} \alpha_{\beta,\Phi} \beta_{\Phi} = \mathbf{0} = (0)_{c \in \Mcell}.
    \]
    This vector equality implies equality for every component associated with $c \in \Mcell$. Restricting the sum to the $c$-th component, we get
    \[
        \sum_{\beta \in \Bbasis, \; \supp(\beta) \cap \overset{\circ}{\Mcell} \: \neq \: \emptyset} \sum_{\Phi \in K(\Ggraph_{\beta})} \alpha_{\beta,\Phi} \beta_{\Phi,c} = 0.
    \]
    For a fixed $c$ and a fixed $\beta$, there is at most one connected component $\Phi \in K(\Ggraph_{\beta})$ such that $c \in \Phi$. Let us denote this unique connected component by $\Phi(\beta, c)$ (if it exists). The sum simplifies to
    \[
        \sum_{\beta \in \Bbasis, \; \supp(\beta) \cap \overset{\circ}{c} \: \neq \: \emptyset} \alpha_{\beta,\Phi(\beta,c)} \beta|_{c} = 0.
    \]
    By the local linear independence of the basis $\Bbasis$ restricted to $c$ (Assumption~\ref{ass:basis}), we must have $\alpha_{\beta,\Phi(\beta,c)} = 0$ for all active $\beta$. Since this holds for every cell $c$, all coefficients $\alpha_{\beta,\Phi}$ must be zero.

    \medskip
    \noindent \textbf{Spanning:}
    Let $s \coloneqq (s_c)_{c \in \Mcell} \in \Sspace(\Mcell)$. On any cell $c \in \Mcell$, the local piece $s_c$ admits a unique expansion in the restricted basis:
    \[
        s_c = \sum_{\beta_c \in \Bbasis_c} \lambda_{\beta}^c \beta_{c}.
    \]
    If two cells $c, c'$ belong to the same connected component $\Phi \in K(\Ggraph_{\beta})$, there exists a path of adjacent cells connecting them. Repeated application of Lemma~\ref{lem:coeff_match} ensures that $\lambda_{\beta}^c = \lambda_{\beta}^{c'}$.
    Thus, for each $\beta$ and each component $\Phi$, we can assign a unique coefficient $\Lambda_{\beta,\Phi}$. We then construct the sum
    \[
        \sigma \coloneqq \sum_{\beta \in \Bbasis} \sum_{\Phi \in K(\Ggraph_{\beta})} \Lambda_{\beta,\Phi} \beta_{\Phi}.
    \]
    Taking the $c$-th component of $\sigma$, we get
    \[
        (\sigma)_c = \sum_{\beta \in \Bbasis} \Lambda_{\beta,\Phi(\beta,c)} \beta_{\Phi,c} = \sum_{\beta_c \in \Bbasis_c} \lambda_{\beta}^c \beta_c = s_c.
    \]
    Hence, $\sigma = s$, and the set spans the space.
\end{proof}

We aim to construct a basis for the generalized spline space $\Sspace(\Mcell)$ by restricting the functions from $\Bbasis$ to the multi-cell $\Mcell$. To ensure that this restriction process yields a well-defined basis, without the need for the basis $\Dbasis(\Mcell)$ introduced in Theorem~\ref{thm:basis D(M)}, we focus our attention on multi-cell domains that preserve the connectivity of the basis supports.

\begin{assumption}[Support connectivity] \label{ass:connectivity}
    For every basis function $\beta \in \Bbasis$, the intersection of its support with the multi-cell, $\supp(\beta) \cap \overset{\circ}{\Mcell}$, is connected.
\end{assumption}

\begin{corollary} \label{cor:basis D(M)}
    Under Assumptions~\ref{ass:basis},~\ref{ass:extension}, and~\ref{ass:connectivity}, the set
    \begin{equation}
    	\label{eq:basis-support-connectivity}
       \Bigl\{ \beta|_{\Mcell} = (\beta|_{c})_{c \in \Mcell} \ \Big|\ \beta \in \Bbasis, \ \supp(\beta) \cap \overset{\circ}{\Mcell} \neq \emptyset \Bigr\}
    \end{equation}
    forms a basis for the generalized spline space $\Sspace(\Mcell)$.
\end{corollary}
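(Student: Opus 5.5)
The plan is to reduce the statement to Theorem~\ref{thm:basis D(M)}. I will show that, under Assumption~\ref{ass:connectivity}, every coefficient graph $\Ggraph_{\beta}(\Mcell)$ with $\supp(\beta)\cap\overset{\circ}{\Mcell}\neq\emptyset$ has exactly one connected component $\Phi$. Once this is known, $\beta_{\Phi}$ coincides with $\beta|_{\Mcell}$, because $\beta_{\Phi,c}=\beta_c$ for every vertex $c$. For cells $c\in\Mcell$ that are not vertices, $\supp(\beta)\cap\overset{\circ}{c}=\emptyset$, so $\beta_c=0$ by continuity of the pieces. Hence the set in \eqref{eq:basis-support-connectivity} equals $\Dbasis(\Mcell)$, and Theorem~\ref{thm:basis D(M)} gives the claim. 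Distinct $\beta$ yield distinct restrictions; this follows from the linear independence already proved in Theorem~\ref{thm:basis D(M)}, since each $\beta$ contributes exactly one element.

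The key step is therefore the connectivity transfer. Let $\Omega:=\supp(\beta)\cap\overset{\circ}{\Mcell}$, which is nonempty and connected. Suppose $\Ggraph_{\beta}(\Mcell)$ split into two nonempty vertex sets $A$ and $B$ with no edges between them. I will partition $\Omega$ as $\Omega_A\cup\Omega_B$, where $\Omega_A:=\Omega\cap\bigcup_{c\in A}c$ and likewise for $B$. Every point of $\Omega$ lies in some cell $c\in\Mcell$ with $\supp(\beta)\cap\overset{\circ}{c}\neq\emptyset$. To see this, note that $\supp(\beta)$ is a box by Property~\ref{ass:basis P2}. A point of $\supp(\beta)$ in the interior of the multi-cell domain has a small neighbourhood contained in $\Mcell$. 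This neighbourhood meets the box $\supp(\beta)$ in a set with nonempty interior, because a product of intervals is the closure of its interior whenever it has nonempty interior. That set therefore meets the interior of some cell of $\Mcell$ that touches the point. Both $\Omega_A$ and $\Omega_B$ are relatively closed in $\Omega$. Connectedness then forces a common point $\mathbf{x}\in c\cap c'$ with $c\in A$ and $c'\in B$.

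The main obstacle is to upgrade such a common point into a face-sharing path inside the graph. The cells $c$ and $c'$ may meet only at a lower-dimensional face, in which case they are not adjacent as vertices. I would take a small open box around $\mathbf{x}$ contained in $\overset{\circ}{\Mcell}$ and intersect it with the box $\supp(\beta)$. The result is a convex set with nonempty interior (here I use the meaning of ``share a common face'' as sharing a $(d-1)$-face). All grid cells meeting this neighbourhood belong to $\Mcell$. Moving within this convex set from a point of $\overset{\circ}{c}$ to a point of $\overset{\circ}{c'}$ along a segment chosen in general position, the path crosses only $(d-1)$-faces. The cells traversed all meet $\supp(\beta)$ in their interiors, and consecutive ones share a $(d-1)$-face. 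This gives a path in $\Ggraph_{\beta}(\Mcell)$ from $A$ to $B$, which is a contradiction. Degenerate supports, where the box has empty interior, cannot occur: then $\Omega$ would contain no cell interior points, and such a $\beta$ would vanish identically, contradicting that it is a basis element. This settles the uniqueness of the component and completes the reduction.
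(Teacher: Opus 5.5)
Your proposal is correct and takes the paper's route: reduce to Theorem~\ref{thm:basis D(M)} by showing that Assumption~\ref{ass:connectivity} makes $\Ggraph_{\beta}(\Mcell)$ a single component. The paper only asserts that step, whereas you prove it. Since the paper calls every $k$-box with $k<d$ a face, two cells meeting only at a lower-dimensional face are already adjacent, so your general-position argument proves more than is needed. Also, in your last paragraph the correct reason degenerate supports are impossible is that a nonzero continuous piece $\beta_c$ is nonzero on an open set; it is not that $\Omega$ contains no cell interior points, which is false for a segment through a cell's interior.
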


\begin{proof}
    Under Assumption~\ref{ass:connectivity}, the coefficient graph $\Ggraph_{\beta}(\Mcell)$ consists of exactly one connected component for every active $\beta \in \Bbasis$.
    Consequently, the basis $\Dbasis(\Mcell)$ in Theorem~\ref{thm:basis D(M)} is composed of the non-trivial restrictions to $\Mcell$ of the global basis functions in $\Bbasis$, as given in \eqref{eq:basis-support-connectivity}.
\end{proof}

\subsection{Generalized hierarchical spline space} \label{ssec:hierarchical space}

We build further on the results of the previous sections to address now the hierarchical setting, which is of particular interest in applications.

We consider a sequence of tensor-product generalized spline spaces $\{\Sspace_l\}_{l=0}^{N-1}$ with the associated $d$-dimensional grids $\{\Ggrid_l\}_{l=0}^{N-1}$. We denote by $\Ccell_l$ the set of all cells of the grid $\Ggrid_l$ and by $\Fface_l$ the set of all faces of $\Ggrid_l$.

\begin{definition}[Multi-level multi-cell]
    Let $\Gamma_l$ be a multi-cell of the grid $\Ggrid_l$ for $l=0,\dots,N-1$ such that
    \begin{itemize}
        \item $\forall \ l,k : \overset{\circ}{\Gamma}_k \cap \overset{\circ}{\Gamma}_l = \emptyset $;
        \item $\forall \ l \le k : \partial\Gamma_k \cap \partial\Gamma_l = \bigcup_{i \in \mathfrak{I}} \tau_{k,i} $ with $\mathfrak{I}$ being a finite set and $\tau_{k,i} \in \Fface_k \ \forall i \in \mathfrak{I}$.
    \end{itemize}
    We define the \emph{multi-level multi-cell} $\Gamma \coloneqq \bigcup_{l=0}^{N-1} \Gamma_l$ as a collection of cells from the levels $l=0,\dots,N-1$ satisfying the two conditions above. The collection $\Gamma$ defines a domain $\bigcup_{c\in \Gamma} c$, called a multi-level domain, and to simplify the notation it will be also denoted by $\Gamma$ when no confusion is possible.
\end{definition}

An example of a valid multi-level multi-cell and an invalid configuration is depicted in Figure~\ref{fig:multi-level multi-cell}.

\begin{figure}[h]
\centering
\begin{tikzpicture}[scale=1.2]

\begin{scope}[xshift=0cm]

\draw[thick, blue] (0,0) rectangle (4,2);
\draw[thick, blue] (2,0) -- (2,2);
\node at (1,-0.3) [blue]{$\Gamma_0$};
\draw[thick, red] (2,0) rectangle (4,2);
\draw[thick, red] (2,1) -- (4,1);
\draw[thick, red] (3,0) -- (3,2);
\node at (3,-0.3) [red]{$\Gamma_1$};
\node at (2,-0.8) {(a) Valid multi-level multi-cell};

\end{scope}

\begin{scope}[xshift=7cm]

\draw[thick, blue] (0,0) rectangle (2,2);
\draw[thick, blue] (2,0) -- (2,2);
\node at (1,-0.3) [blue]{$\Gamma_0$};
\draw[thick, red] (2,-0.5) rectangle (4,2.5);
\draw[thick, red] (2,1) -- (4,1);
\draw[thick, red] (3,-0.5) -- (3,2.5);
\node at (3,-0.8) [red]{$\Gamma_1$};
\node at (2,-1.2) {(b) Invalid: boundary is not a union of faces};

\end{scope}

\end{tikzpicture}

\caption{
(a) Valid configuration: interiors are disjoint and
$\partial\Gamma_1 \cap \partial\Gamma_0$ is a union of level-$1$ faces.
(b) Invalid configuration: interiors remain disjoint, but
the boundary intersection consists of portions of vertical level-$1$ edges,
hence it is not a union of faces of the finer grid.
} \label{fig:multi-level multi-cell}
\end{figure}

We aim to define a generalized spline space on a multi-level multi-cell, similarly to the generalized spline space on a multi-cell, using the spaces $\Sspace_l(\Gamma_l) $ for $l=0,\dots,N-1$.
If the multi-level domain consists of multiple disjoint connected components, the resulting spline space is the direct sum of the spaces associated with each component. Consequently, the analysis can be reduced to connected multi-level domains.
Without loss of generality, we assume that the multi-level domain is connected.

The next definition extends the contact condition to the multi-level framework.

\begin{definition}[Hierarchical contact]
    Let $\Gamma \coloneqq \bigcup_{l=0}^{N-1} \Gamma_l$ be a multi-level multi-cell, and for each level $l$, $s_l \coloneqq (s_{l,c_l})_{c_l \in \Gamma_l} \in \Sspace_l(\Gamma_l)$.
    For two levels $l < k$, we say that $s_l$ and $s_k$ have \emph{hierarchical contact}, and we write $s_l \sim_{\h} s_k$, if the following holds:
    \[
        \forall\, c_l \in \Gamma_l,\; \forall\, c_k \in \Gamma_k
        \text{ such that } c_l \cap c_k = \tau_k \in \Fface_k,
        \quad
        s_{l,c_l} \sim s_{k,c_k}
        \text{ with respect to level } k. 
    \]
    Here, $\Fface_k$ denotes the set of faces of the level-$k$ grid, and ``with respect to level $k$'' means that the contact order is dictated by the finer (level-$k$) regularity.
\end{definition}

We now introduce the piecewise space on a multi-level multi-cell $\Gamma \coloneqq \bigcup_{l=0}^{N-1} \Gamma_l $:
\[
    \PWspace(\Gamma) \coloneqq
    \Bigl\{
    s \coloneqq (s_l)_{l= 0}^{N-1} \ \Big|\ s_l\in \Sspace_l(\Gamma_l) \ \text{for} \ l=0,\dots,N-1
    \Bigr\}.
\]
The hierarchical contact and the piecewise space on a multi-level multi-cell allow us to naturally define the generalized spline space on a multi-level multi-cell, which is in fact another way to define a hierarchical spline space on a hierarchical domain that coincides with the multi-level multi-cell.

\begin{definition}[Generalized hierarchical spline space]
    The \emph{generalized hierarchical spline space} on the multi-level multi-cell $\Gamma \coloneqq \bigcup_{l=0}^{N-1}\Gamma_l$ is defined as
    \[
        \Hspace(\Gamma) \coloneqq
        \Bigl\{
        s \coloneqq (s_l)_{l=0}^{N-1}\in \PWspace(\Gamma) \ \Big|\ s_l \sim_{\h} s_k
        \ \forall l<k
        \Bigr\}.
    \]
    Hence, $\Hspace(\Gamma)$ contains all collections of level-wise spline functions that satisfy hierarchical contact.
\end{definition}

\subsection{Generalized hierarchical spline basis} \label{ssec:hierarchical basis}

In order to construct a basis for the generalized hierarchical spline space $\Hspace(\Gamma)$, we make the following assumption.

\begin{assumption}[Hierarchy of spline spaces] \label{ass:basis hierachichal setting}
    The sequence $\{\Sspace_l\}_{l=0}^{N-1}$ satisfies the following:
    \begin{enumerate}
        \item the sequence is defined on a common domain $\Omega \supseteq \Gamma$;
        \item the sequence is nested ($\Sspace_{0}\subseteq \dots \subseteq \Sspace_{N-1}$);
        \item each space $\Sspace_l$ is spanned by a basis $\Bbasis_l$ that satisfies Assumption~\ref{ass:basis};
        \item each space $\Sspace_l$ satisfies Assumption~\ref{ass:extension}.
    \end{enumerate}
\end{assumption}

We define the generalized hierarchical spline basis via a selection mechanism, which is similar to the usual construction of the hierarchical B-spline basis \cite{giannelli_strongly_2014,lyche_foundations_2018}. From each level $l$, we select the functions in $\Bbasis_l$ that do not vanish on $\Gamma_l$ and with disjoint support from the interior of $\Gamma_k$ for $k<l$.

\begin{definition}[Selection mechanism]\label{def:selection}
    For each level $l$, we define the set $\Hbasis_l \subseteq \Bbasis_l$ as
    \[
        \Hbasis_l \coloneqq \Bigl\{ \beta \in \Bbasis_l \ \Big|\ \beta|_{\Gamma_l} \neq 0 \ \wedge \ \supp(\beta) \cap \overset{\circ}{\Gamma}_k = \emptyset \:\: \forall k < l \bigr\}.
    \]
\end{definition}

The selection mechanism allows us to construct a set of linearly independent functions in $\Hspace(\Gamma)$, which, under suitable assumptions, turns out to span the generalized hierarchical spline space, as shown in the following theorems.
 
\begin{theorem}[Generalized hierarchical spline basis] \label{thm:linind}
   Define the set
   \begin{equation}
   	\label{eq:hierarchical-basis}
        \Hbasis_{\Gamma} \coloneqq  \biggl\{ \beta_{\Gamma} \coloneqq (\beta|_{\Gamma_l})_{l=0}^{N-1} \ \bigg|\ \beta \in
\bigcup_{l=0}^{N-1} \Hbasis_l \biggr\}.
\end{equation}
Under Assumption~\ref{ass:basis hierachichal setting}, we have $\Hbasis_{\Gamma} \subseteq \Hspace(\Gamma)$ and $\Hbasis_{\Gamma}$ is linearly independent.
\end{theorem}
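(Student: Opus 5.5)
The statement has two parts, and I will treat them separately. The first is membership, $\Hbasis_{\Gamma} \subseteq \Hspace(\Gamma)$. The second is linear independence. For both, the key tool is the nestedness $\Sspace_0 \subseteq \dots \subseteq \Sspace_{N-1}$ from Assumption~\ref{ass:basis hierachichal setting}.

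\textbf{Membership.} Fix $\beta \in \Hbasis_l$. Then $\beta \in \Sspace_l \subseteq \Sspace_k$ for every $k \ge l$, so $\beta$ is a global level-$k$ spline. Its pieces on any two level-$k$ cells therefore have contact with respect to level $k$. Restricting to $\Gamma_k$ gives $\beta|_{\Gamma_k} \in \Sspace_k(\Gamma_k)$, since the contact conditions among cells of $\Gamma_k$ are inherited from the global ones.

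For $k < l$, the selection rule gives $\supp(\beta) \cap \overset{\circ}{\Gamma}_k = \emptyset$. Hence $\beta|_{\Gamma_k} = 0$, which trivially lies in $\Sspace_k(\Gamma_k)$.

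For hierarchical contact, take levels $m < k$, cells $c_m \in \Gamma_m$, $c_k \in \Gamma_k$, with $c_m \cap c_k = \tau_k \in \Fface_k$. There are three cases.
\begin{itemize}
\item If $l \le m$, then $\beta \in \Sspace_k$. Pick the level-$k$ cell $c' \subseteq c_m$ adjacent to $\tau_k$. The piece $\beta|_{c_m}$ is defined on all of $c_m$ and agrees with $\beta|_{c'}$. Since $\beta$ is a global $\Sspace_k$ spline, $\beta|_{c'} \sim \beta|_{c_k}$ with respect to level $k$. Hence $\beta|_{c_m} \sim \beta|_{c_k}$ with respect to level $k$.
\item If $m < l$, then $\beta|_{c_m} = 0$, and I must show $\beta|_{c_k}$ vanishes to the level-$k$ contact order on $\tau_k$. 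Since $\supp(\beta)$ misses $\overset{\circ}{c}_m$, the function $\beta$ (a $\Sspace_k$ spline when $k \ge l$) is identically zero on the level-$k$ cell $c' \subseteq c_m$ next to $\tau_k$. Contact between $c'$ and $c_k$ then forces the required derivatives of $\beta|_{c_k}$ to vanish on $\tau_k$.
\item If $m < k < l$, both pieces are zero and there is nothing to check.
\end{itemize}

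\textbf{Linear independence.} Suppose $\sum_{l}\sum_{\beta \in \Hbasis_l} \alpha_\beta \beta_{\Gamma} = 0$. I argue by induction on the level, starting at $l = 0$. Assume the coefficients of all levels below $l$ are already zero. Restrict the remaining sum to a cell $c \in \Gamma_l$.

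Functions from levels $k > l$ vanish there. Indeed, $\beta \in \Hbasis_k$ requires $\supp(\beta) \cap \overset{\circ}{\Gamma}_l = \emptyset$. Since $\supp\beta$ is a closed box, it can meet $c$ only on its boundary, so it vanishes on $c$ as a piece.

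What remains on $c$ is $\sum_{\beta \in \Hbasis_l} \alpha_\beta \beta|_c = 0$. Local linear independence, property~\ref{ass:basis P1} of $\Bbasis_l$, then gives $\alpha_\beta = 0$ for every $\beta \in \Hbasis_l$ with $\beta|_c \ne 0$. Each $\beta \in \Hbasis_l$ satisfies $\beta|_{\Gamma_l} \neq 0$, so it is nonzero on some cell of $\Gamma_l$. Hence all level-$l$ coefficients vanish, and the induction proceeds.

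\textbf{Main obstacle.} The delicate point is the mixed case $m < l \le k$ in the membership part. It requires identifying $\beta|_{c_m}$ with a level-$k$ piece and exploiting the vanishing of $\beta$ on a neighbouring level-$k$ cell inside $c_m$. This is where the geometric condition on multi-level multi-cells is used: $c_m \cap c_k$ is a level-$k$ face, so the appropriate level-$k$ subcell of $c_m$ exists and is adjacent to $\tau_k$.
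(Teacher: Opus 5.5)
Your proof is correct and follows the same route as the paper. Membership comes from nestedness, with the components below the selection level vanishing, and linear independence comes from a level-by-level induction using the local linear independence property~\ref{ass:basis P1}.

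You are more explicit than the paper, which settles hierarchical contact with the single remark that all components are restrictions of the same function $\beta$; your case $m<l\le k$ spells this out. Note, though, that the level-$k$ cell $c'\subseteq c_m$ containing $\tau_k$ exists because the level-$k$ grid refines the level-$m$ grid, which is implicit in the hierarchical setting, and not because of the face condition on $\Gamma$.
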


\begin{proof}
    Let us fix $l \in \{0,\dots,N-1\}$ and let $\beta \in \Hbasis_l $. We first remark that the components of $\beta_{\Gamma} \in \Hbasis_{\Gamma}$ are well defined for $k \ge l$  because of the nestedness of the spaces and they vanish  for $k<l$. Moreover, since $\beta|_{\Gamma_k}$ and $\beta|_{\Gamma_l}$ for $l<k$ are restrictions of the same function $\beta \in \Sspace_l$, $\beta_{\Gamma}$ naturally satisfies the hierarchical contact condition $\beta|_{\Gamma_k} \sim_{\h} \beta|_{\Gamma_l}$. This proves $\Hbasis_{\Gamma} \subseteq \Hspace(\Gamma)$.

    Let $\sum_{\beta_{\Gamma} \in \Hbasis_{\Gamma}} c_{\beta_{\Gamma}} \beta_{\Gamma} = 0 \in \Hspace(\Gamma)$. We verify linear independence level-by-level. We consider the first component corresponding to $\Gamma_0$. Then, only $\beta_{\Gamma}$ with $\beta \in\Hbasis_0$ are non-zero (since for $l > 0$ it holds $\beta|_{\Gamma_0} = 0$):
    \[
        \sum_{\beta \in \Hbasis_0} c_{\beta_{\Gamma}} \beta|_{\Gamma_0} = 0 \in \Sspace_0(\Gamma_0).
    \]
    By the local linear independence of $\Bbasis_0$ (Assumption~\ref{ass:basis}), we must have $c_{\beta_{\Gamma}} = 0$ for all $\beta \in \Hbasis_0$.

    Proceeding inductively: if the coefficients are zero for all $\beta_{\Gamma}$ up to level $l-1$, the component corresponding to $\Gamma_l$ contains only terms from $\Hbasis_l$. By the local linear independence of $\Bbasis_l$, these coefficients must also be zero. Thus, $\Hbasis_{\Gamma}$ is linearly independent.
\end{proof}

\begin{theorem}[Completeness of the hierarchical spline basis] \label{thm:hier_complete}
    Under the hypothesis of Theorem~\ref{thm:linind} and if Assumption~\ref{ass:connectivity} is satisfied for each level, the generalized hierarchical spline basis $\Hbasis_{\Gamma}$ in \eqref{eq:hierarchical-basis} is complete, i.e., $\Span\Hbasis_{\Gamma} = \Hspace(\Gamma)$.
\end{theorem}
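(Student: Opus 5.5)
We argue by induction on the level, peeling off one level at a time and subtracting a suitable combination of selected functions. Let $s=(s_l)_{l=0}^{N-1}\in\Hspace(\Gamma)$. The inclusion $\Span\Hbasis_\Gamma\subseteq\Hspace(\Gamma)$ is already given by Theorem~\ref{thm:linind}, so only the reverse inclusion needs proof. The goal is to build, level by level, coefficients $\{c_\beta\}_{\beta\in\Hbasis_0\cup\dots\cup\Hbasis_l}$ such that the residual
\[
r^{(l)} \coloneqq s-\sum_{k\le l}\sum_{\beta\in\Hbasis_k} c_\beta\,\beta_\Gamma
\]
has vanishing components on $\Gamma_0,\dots,\Gamma_l$. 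At $l=N-1$ this gives $r^{(N-1)}=0$, which is the claim.

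\textbf{Step 1 (expanding the residual).} The residual $r^{(l-1)}$ lies in $\Hspace(\Gamma)$, being a difference of elements of $\Hspace(\Gamma)$. Its $\Gamma_l$-component lies in $\Sspace_l(\Gamma_l)$. By Corollary~\ref{cor:basis D(M)}, applied at level $l$ with the support connectivity Assumption~\ref{ass:connectivity}, we can write
\[
r^{(l-1)}_l=\sum_{\beta} \mu_\beta\,\beta|_{\Gamma_l},
\]
where the sum runs over $\beta\in\Bbasis_l$ with $\supp(\beta)\cap\overset{\circ}{\Gamma}_l\neq\emptyset$.

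\textbf{Step 2 (the key step).} The crux is to show that $\mu_\beta=0$ for every such $\beta$ whose support meets $\overset{\circ}{\Gamma}_k$ for some $k<l$. Once this holds, only selected functions $\beta\in\Hbasis_l$ remain. We set $c_\beta=\mu_\beta$ for these. Their $\beta_\Gamma$ vanish on $\Gamma_k$ for $k<l$, so subtracting them kills the $\Gamma_l$-component without disturbing the lower levels already cleared.

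To prove Step 2, fix such a $\beta$. The residual vanishes on all $\Gamma_k$ with $k<l$, so by hierarchical contact the piece $r^{(l-1)}_{l,c}$ has zero contact, with respect to level $l$, across every level-$l$ face $\tau$ shared by a cell $c\in\Gamma_l$ and a coarser cell. For such an interface we apply Lemma~\ref{lem:coeff_match} in $\Sspace_l$, taking $c$ and the level-$l$ cell $c'\subseteq\Gamma_k$ on the other side of $\tau$. Such a $c'$ exists because $\partial\Gamma_k\cap\partial\Gamma_l$ is a union of level-$l$ faces, and the zero function is a legitimate element of $\Uspace_{c'}$ in contact with $r_{l,c}$. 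The lemma then gives $\lambda^c_\beta(r_{l,c})=\lambda^{c'}_\beta(0)=0$ for every $\beta$ active on both $c$ and $c'$.

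It remains to transport this vanishing. Assumption~\ref{ass:connectivity} makes $\supp(\beta)\cap\overset{\circ}{\Gamma}_l$ connected, so the coefficient $\mu_\beta$ is the same on all cells of $\Gamma_l$ that $\beta$ touches. It therefore suffices to find one cell $c\in\Gamma_l$ in $\supp\beta$ that is adjacent through a face to a level-$l$ cell $c'$ inside some $\overset{\circ}{\Gamma}_k$, $k<l$, with $\beta$ active on $c'$.

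\textbf{Main obstacle.} The hard part is exactly this geometric claim. The support of $\beta$ is a box, by Assumption~\ref{ass:basis}, and it meets both $\overset{\circ}{\Gamma}_l$ and $\overset{\circ}{\Gamma}_k$. We plan to walk along a path of level-$l$ cells inside the box $\supp\beta$ from a cell in $\Gamma_l$ to a cell in $\Gamma_k$, crossing faces only.

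Two complications arise along such a path. First, it may leave $\Gamma$ altogether, or pass through finer levels $m>l$. Cells of finer levels are subdivided further, and their contact conditions come from level $m$, which involves higher regularity. We would handle this by inducting on the level ordering and using nestedness: since $\beta\in\Sspace_l\subseteq\Sspace_m$, the finer pieces can also be expressed in level-$m$ bases. Second, the path may touch a coarse cell only along a lower-dimensional face, where Lemma~\ref{lem:coeff_match} still applies because it only requires $c\cap c'\neq\emptyset$.

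If the path argument proves too delicate, the fallback is to strengthen the bookkeeping. Instead of requiring vanishing on lower levels only, we would treat the component $r^{(l-1)}$ on the union $\Gamma_0\cup\dots\cup\Gamma_l$ viewed as a level-$l$ multi-cell, on which $r^{(l-1)}$ lies in $\Sspace_l(\cdot)$ by nestedness and contact. We would then apply Corollary~\ref{cor:basis D(M)} directly on that union, where connectivity forces the coefficients of non-selected $\beta$ to equal their zero values on the coarse part.
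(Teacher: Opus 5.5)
\textbf{Genuine gap: your Step 2 is left unproved, and under the theorem's stated hypotheses it is false.}

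Your plan is the paper's plan: subtract selected functions level by level, expand $r^{(l-1)}_l$ via Corollary~\ref{cor:basis D(M)} on $\Gamma_l$, and kill the non-selected coefficients. You have also located exactly the step the paper glosses over. Its proof simply asserts that a residual vanishing on the boundary shared with coarser levels lies in the span of $\Hbasis_l$.

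The problem is the hypothesis. With Assumption~\ref{ass:connectivity} imposed on each $\Gamma_l$ separately (the reading the paper's proof uses), Step 2 does not hold. Your use of Lemma~\ref{lem:coeff_match} is valid when $\beta$ is active on a cell of $\Gamma_l$ that touches a coarse cell. The geometric claim fails, though, when the only route inside $\supp\beta$ from $\Gamma_l$ to a coarser $\Gamma_k$ crosses a finer level or leaves $\Gamma$. Nestedness does not rescue it. When level $l$ is processed, the residual on $\Gamma_m$ with $m>l$ is an unconstrained element of the richer space $\Sspace_m(\Gamma_m)$. It can match zero data on one side and the data of $\beta$ on the other, so no vanishing propagates.

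\textbf{Counterexample.} Take $C^1$ quadratic polynomial splines on $[0,4]$ with level-$l$ knots at $2^{-l}\mathbb{Z}$ (clamped at the ends). Set $\Gamma_0=[0,2]$, $\Gamma_2=[2,2.5]$, $\Gamma_1=[2.5,4]$.
\begin{itemize}
\item All hypotheses hold. In particular each $\Gamma_l$ is an interval, so support connectivity is automatic.
\item $\Hspace(\Gamma)$ is the space of $C^1$ quadratic splines with knots $1,2,2.25,2.5,3,3.5$, which has dimension $9$.
\item $\Hbasis_\Gamma$ has only $4+4+0=8$ elements. No level-$2$ B-spline (support length $3/4$) fits inside $[2,2.5]$. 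The level-$1$ B-spline $\beta$ with knots $1.5,2,2.5,3$ is active on $\Gamma_1$ but is excluded because it meets $(0,2)$.
\end{itemize}
Explicitly, set $s_0=0$ and $s_1=\beta|_{\Gamma_1}$. Let $s_2$ be the $C^1$ quadratic spline on $[2,2.5]$ with knot $2.25$ that has zero value and derivative at $2$ and the value and derivative of $\beta$ at $2.5$; this Hermite problem is unisolvent. Then $s=(s_0,s_1,s_2)\in\Hspace(\Gamma)$. Since $s_0=0$, all level-$0$ coefficients vanish, and the level-$1$ residual is $\beta|_{\Gamma_1}$, with $\mu_\beta=1$ for a non-selected $\beta$. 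So the statement, read this way, is false, and the paper's proof has the same hole you left open.

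\textbf{Your fallback is the right repair, but it changes the hypothesis.} Assume that for each $l$, Assumption~\ref{ass:connectivity} holds for $\Bbasis_l$ on the level-$l$ multi-cell $\Gamma_0\cup\dots\cup\Gamma_l$, with coarse cells subdivided into level-$l$ cells (this uses nested grids). Then the argument closes:
\begin{itemize}
\item $r^{(l-1)}$ restricted to this multi-cell lies in $\Sspace_l(\cdot)$: it is zero on the coarse cells, and hierarchical contact gives level-$l$ contact across mixed faces.
\item Corollary~\ref{cor:basis D(M)} then gives one coefficient $\mu_\beta$ per $\beta$.
\item Local linear independence on the coarse cells, where the residual vanishes, forces $\mu_\beta=0$ for every $\beta$ active there.
\item What remains is exactly a combination of elements of $\Hbasis_l$.
\end{itemize}
This is a different assumption from the one in the statement. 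In the example it fails at $l=1$, since $\supp\beta\cap\bigl((0,2)\cup(2.5,4)\bigr)=[1.5,2)\cup(2.5,3]$ is disconnected.
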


\begin{proof}
    Let $s \coloneqq (s_0, \dots, s_{N-1}) \in \Hspace(\Gamma)$.
    We construct the representation inductively.
    On $\Gamma_0$, $s_0$ belongs to the restricted space $\Sspace_0|_{\Gamma_0} $. Since $\Bbasis_0$ spans $\Sspace_0$, there exist unique coefficients for the functions in $\Hbasis_0$ (which are the only active functions on $\Gamma_0$) to represent $s_0$.
    Assume we have represented $s$ up to level $l-1$. Consider level $l$, the error between $s_l$ and the extension of the coarser basis functions is the residual $f_l$.
    Due to the nestedness, the coarse basis functions extend exactly into the space $\Sspace_l$. Due to the hierarchical contact $s_{l-1} \sim_{\h} s_l$, the residual $f_l$ vanishes on the boundary shared with coarser levels.
    Therefore, $f_l$ lies in the subspace of $\Sspace_l$ spanned by functions whose supports do not intersect coarser levels; exactly the set $\Hbasis_l$. Since $\Bbasis_l$ spans $\Sspace_l$ (Corollary~\ref{cor:basis D(M)}), we can find coefficients for $\Hbasis_l$ to match this residual exactly.
\end{proof}

\subsection{Generalized decoupled hierarchical spline basis}
\label{ssec:decoupled basis}

We now address the situation where Assumption~\ref{ass:connectivity} is no longer satisfied. In this context, we can construct a hierarchical basis by using the bases $\Dbasis(\Gamma_l)$ introduced in Theorem~\ref{thm:basis D(M)}. However, simply selecting functions is not sufficient. While a global basis function $\beta \in \Bbasis_l$ can be extended to the finer space $\Sspace_{l+1}$ due to nestedness, the elements of the basis $\Dbasis(\Gamma_l)$ might not, since the equality $\Sspace(\Gamma_l)=\Sspace|_{\Gamma_l}$ is no longer guaranteed. We must ensure that the ``cut'' performed to isolate a component does not violate the hierarchical contact. This requires the \emph{feasibility of decoupling} condition, introduced for the polynomial case in \cite{mokris_completeness_2014}. In the following definition, each connected component $\Phi$ of the coefficient graph represents a domain $\bigcup_{c\in \Phi} c$.

\begin{definition}[Feasibility of decoupling]\label{def:feas of decoup}
    Let $\beta \in \Bbasis_l$ be a basis function with disconnected coefficient graph on $\Gamma_l$. The decoupling of $\beta$ is said to be \emph{feasible} with respect to the finer space $\Sspace_{l+1}$ if for every fine basis function $\gamma \in \Bbasis_{l+1}$, the support of $\gamma$ intersects the interior of at most one connected component $\Phi \in K(\Ggraph_{\beta})$.
\end{definition}

\begin{remark} \label{rmk:decoupling}
A sufficient condition for the feasibility of decoupling in some level $l$ is that the refinement in level $l+1$ is dense enough with respect to the grid $\Ggrid_l$. Specifically, we require that for each element $\gamma$ of the fine basis $\Bbasis_{l+1}$, the support is contained within a multi-cell $\Mcell_{\gamma}$ of the coarser level $l$:
\[\supp(\gamma) \subseteq \Mcell_{\gamma},\]
where $\Mcell_{\gamma}$ consists of at most two adjacent cells of $\Ccell_l$.
This geometric constraint ensures that the support of a fine basis function $\gamma$ is sufficiently localized. Consequently, it cannot bridge the gap between two disconnected components of a coarse basis function's support, thereby guaranteeing that $\supp(\gamma)$ intersects at most one component $\Phi \in K(\Ggraph_{\beta})$ for any $\beta \in \Bbasis_l$.
\end{remark}

Figure~\ref{fig:decoupling feasibility} illustrates this condition. The multi-level domain $\Gamma$ (top) is constructed such that the supports of the basis functions at each level (bottom) are sufficiently small relative to the coarse cells they inhabit.

\begin{figure}[h]
    \centering
    \begin{tikzpicture}[scale=1.4]

        \begin{scope}[yshift=3cm, xshift=1cm]
            \def\sz{1.5}

            \fill[pattern=north east lines, pattern color=blue] (-0.2, -0.2) rectangle (4*\sz + 0.2, \sz - 0.2);
            \draw[blue, thick, dashed] (-0.2, -0.2) rectangle (4*\sz + 0.2, \sz - 0.2);

            \node[blue, fill=white, inner sep=2pt, anchor=south] at (2*\sz, -0.6) {$\supp(\beta),\ \beta \in \Bbasis_0$ };

            \draw[thick, blue] (0, 0) rectangle (\sz, \sz);

            \draw[thick, red, fill=white] (0, \sz) rectangle (\sz, 2*\sz);
            \draw[red, step=\sz/2] (0, \sz) grid (\sz, 2*\sz);

            \draw[thick, blue] (3*\sz, 0) rectangle (4*\sz, \sz);

            \draw[thick, red, fill=white] (3*\sz, \sz) rectangle (4*\sz, 2*\sz);
            \draw[red, step=\sz/2] (3*\sz, \sz) grid (4*\sz, 2*\sz);

            \draw[thick, green!60!black, fill=white] (\sz, \sz) rectangle (3*\sz, 2*\sz);
            \draw[green!60!black, step=\sz/4] (\sz, \sz) grid (3*\sz, 2*\sz);

            \node[black, anchor=south] at (2*\sz, 2*\sz) {(a) Multi-level domain $\Gamma$};
        \end{scope}

        \begin{scope}[yshift=0cm, xshift=0cm]
            \def\cz{1.5}

            \draw[thick, blue] (0,0) rectangle (2*\cz, \cz);
            \draw[thick, blue] (\cz,0) -- (\cz, \cz);
            \node[blue, anchor=south west] at (0, \cz) {$\Mcell_\gamma$};

            \draw[thick, red, pattern=north east lines, pattern color=red] (0.5*\cz, 0.2*\cz) rectangle (1.5*\cz, 0.8*\cz);
            \draw[thick, red] (0.5*\cz, 0.2*\cz) rectangle (1.5*\cz, 0.8*\cz);

            \draw[<-] (1.2*\cz, 0.8*\cz) -- (1.5*\cz, 1.2*\cz) node[anchor=west, red] {$\supp(\gamma),\ \gamma \in \Bbasis_1$};

            \node[black, font=\small] at (\cz, -0.5) {(b) Feasibility for $l=0$};
        \end{scope}

        \begin{scope}[yshift=0cm, xshift=5cm]
            \def\cz{1.5}

            \draw[thick, red] (0.5*\cz,0.5*\cz) rectangle (1.5*\cz, \cz);
            \draw[thick, red] (\cz,0.5*\cz) -- (\cz, \cz);
            \draw[thick, red] (1.5*\cz,0.5*\cz) -- (1.5*\cz, \cz);
            \node[red, anchor=south west] at (0.5*\cz, \cz) {$\Mcell_\gamma$};

            \draw[thick, green!60!black, pattern=north west lines, pattern color=green!60!black] (0.6*\cz, 0.6*\cz) rectangle (1.4*\cz, 0.9*\cz);
            \draw[thick, green!60!black] (0.6*\cz, 0.6*\cz) rectangle (1.4*\cz, 0.9*\cz);

            \draw[<-] (1.20*\cz, 0.7*\cz) -- (1.50*\cz, 1.2*\cz) node[anchor=west, green!60!black] {$\supp(\gamma),\ \gamma \in \Bbasis_2$};

            \node[black, font=\small] at (\cz, -0.5) {(c) Feasibility for $l=1$};
        \end{scope}

    \end{tikzpicture}
    \caption{(a) The multi-level domain $\Gamma$. (b) The support of a level-$1$ basis function (red) is contained within a multi-cell consisting of two adjacent cells from level $0$ (blue). (c) The support of a level-$2$ basis function (green) is contained within a multi-cell consisting of two adjacent cells from level $1$ (red).}
    \label{fig:decoupling feasibility}
\end{figure}

\begin{remark}
    Examples of refinement strategies that satisfy the sufficient condition in Remark~\ref{rmk:decoupling} for polynomial splines can be found in \cite{mokris_completeness_2014}.
\end{remark}

If the feasibility of the decoupling holds for some level $l$, each $\beta_{\Phi} \in \Dbasis(\Gamma_l)$ can be extended to $\Sspace_{l+1}$ as follows:
\begin{equation}
\label{eq:extension}
    E_l(\beta_{\Phi}) \coloneqq \sum_{\gamma \in \Bbasis_{l+1} \: : \: \supp(\gamma) \cap \overset{\circ}{\Phi} \: \neq \: \emptyset} \lambda_{\gamma}^{l+1}(\beta) \ \gamma ,
\end{equation}
where $\lambda_{\gamma}^{l+1}(\beta)$ is the coefficient in the representation of $\beta \in \Bbasis_l$ with respect to the basis $\Bbasis_{l+1}$.

\begin{lemma} \label{lem:extension}
The extension in \eqref{eq:extension} is consistent, i.e.,
\[
E_l(\beta_{\Phi})|_{\Gamma_l} = \beta_{\Phi} \in \Sspace_l(\Gamma_l).
\]
\end{lemma}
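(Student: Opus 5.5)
The plan is to prove the identity cell by cell. Fix $c \in \Gamma_l$; we show that the $c$-component of $E_l(\beta_{\Phi})$ equals $\beta_{\Phi,c}$. There are two cases: $c \in \Phi$, and $c \notin \Phi$ (this includes cells of other components $\Phi' \in K(\Ggraph_\beta)$ and cells where $\beta$ vanishes).

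The starting point is nestedness (Assumption~\ref{ass:basis hierachichal setting}): since $\beta \in \Sspace_l \subseteq \Sspace_{l+1}$, we have the global identity
\[
\beta = \sum_{\gamma \in \Bbasis_{l+1}} \lambda_{\gamma}^{l+1}(\beta)\, \gamma .
\]
Restricting to $c$, only those $\gamma$ with $\supp(\gamma)\cap \overset{\circ}{c} \neq \emptyset$ contribute, because a $\gamma$ whose box-shaped support (Property~\ref{ass:basis P2}) misses $\overset{\circ}{c}$ vanishes on $c$ by continuity. Hence $\beta|_c = \sum_{\gamma:\, \supp(\gamma)\cap\overset{\circ}{c}\neq\emptyset} \lambda_\gamma^{l+1}(\beta)\,\gamma|_c$.

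\textbf{Case $c\in\Phi$.} Every $\gamma$ with $\supp(\gamma)\cap\overset{\circ}{c}\neq\emptyset$ also meets $\overset{\circ}{\Phi}$, so it appears in the sum \eqref{eq:extension}. Conversely, any $\gamma$ in \eqref{eq:extension} whose support misses $\overset{\circ}{c}$ contributes zero on $c$. Therefore $E_l(\beta_\Phi)|_c = \beta|_c = \beta_{\Phi,c}$.

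\textbf{Case $c\notin\Phi$.} We must show that $E_l(\beta_\Phi)|_c = 0$. Consider the terms $\gamma$ in \eqref{eq:extension} that are nonzero on $c$; each meets both $\overset{\circ}{\Phi}$ and $\overset{\circ}{c}$.
\begin{itemize}
\item If $c$ lies in another component $\Phi'$, feasibility of decoupling (Definition~\ref{def:feas of decoup}) forbids any $\gamma$ from meeting both $\overset{\circ}{\Phi}$ and $\overset{\circ}{\Phi'}$. So no such $\gamma$ exists, and the restriction is $0$.
\item If instead $\supp(\beta)\cap\overset{\circ}{c}=\emptyset$, then $\beta|_c=0$. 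Here we would like to say that the set of $\gamma$ meeting $\overset{\circ}{c}$ splits into those meeting $\overset{\circ}{\Phi}$ and those that do not, and then use local linear independence of $\Bbasis_{l+1}$ on the level-$(l+1)$ cells inside $c$ (Property~\ref{ass:basis P1}). Since $\beta|_c=0$, every $\gamma$ active on such a fine cell has $\lambda_\gamma^{l+1}(\beta)=0$, so every term vanishes on $c$.
\end{itemize}
The latter argument in fact handles the whole case $c\notin\Phi$ whenever $\beta|_c=0$.

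The delicate subcase is $c\in\Phi'\neq\Phi$. There $\beta|_c\neq0$, and we must exclude $\gamma$ active on both $c$ and $\Phi$, which is exactly what feasibility provides. One subtlety is that $\overset{\circ}{\Phi}$ denotes the interior of the union of the cells of $\Phi$. A $\gamma$ could touch $c$ only through an interior point of that union lying on a shared face; but since $\Phi$ and $\Phi'$ are distinct components of $\Ggraph_\beta(\Gamma_l)$, a $\gamma$ meeting $\overset{\circ}{c}$ and $\overset{\circ}{\Phi}$ is still covered by the definition.

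A further check is needed to conclude that the components of $E_l(\beta_\Phi)|_{\Gamma_l}$ form an element of $\Sspace_l(\Gamma_l)$, equal to $\beta_\Phi$. This follows from the cellwise identity together with $\beta_\Phi\in\Sspace_l(\Gamma_l)$, which was established before Theorem~\ref{thm:basis D(M)}. The main obstacle is making the support and interior bookkeeping rigorous: the fine basis functions are indexed over fine cells, while $\Phi$ consists of coarse cells, so local linear independence must be applied on fine cells contained in $c$.
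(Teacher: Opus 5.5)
Your proposal is correct and follows the paper's own cellwise argument. Nestedness gives the expansion of $\beta$ in $\Bbasis_{l+1}$, which reproduces $\beta_c$ for $c\in\Phi$, and feasibility of decoupling removes every term on cells of another component $\Phi'$. Your extra treatment of cells $c\in\Gamma_l$ with $\supp(\beta)\cap\overset{\circ}{c}=\emptyset$, using local linear independence of $\Bbasis_{l+1}$ on the fine cells inside $c$, is actually needed. The paper assigns this case to feasibility, but feasibility only constrains intersections with components of $\Ggraph_\beta(\Gamma_l)$. That step also tacitly uses that each level-$l$ cell is a union of level-$(l+1)$ cells, which is implicit in the hierarchical setting.
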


\begin{proof}
Since the spaces are nested, each $\beta \in \Bbasis_l$
admits a unique representation in $\Sspace_{l+1}$:
\[
\beta
=
\sum_{\gamma \in \Bbasis_{l+1}}
\lambda^{l+1}_\gamma(\beta)\,\gamma.
\]
By the feasibility of the decoupling, for every $\gamma \in \Bbasis_{l+1}$ the support of $\gamma$ intersects at most one connected component of
$\Ggraph_\beta(\Gamma_l)$.  Let $\Phi$ be a connected component of $\Ggraph_\beta(\Gamma_l)$.

We now compare $E_l(\beta_{\Phi})$ and $\beta_{\Phi}$ on $\Gamma_l$.
If $c \in \Phi$, then on $c$ the function $\beta_{\Phi}$ coincides with $\beta$.
Moreover, on $c$ the representation of $\beta$ in terms of $\Bbasis_{l+1}$ reduces precisely to the sum over those $\gamma$ whose support intersects $\Phi$.
Hence,
\[
E_l(\beta_{\Phi})|_c
=
\beta_c
=
\beta_{\Phi,c}.
\]
If $c \in \Gamma_l$ and $c \notin \Phi$, then $\beta_{\Phi,c} = 0$.
By the feasibility, any $\gamma$ appearing in the expansion
defining $E_l(\beta_\Phi)$ has support intersecting only $\Phi$,
hence its restriction to $c$ vanishes.
Therefore,
\[
E_l(\beta_{\Phi})|_c = 0 = \beta_{\Phi,c}.
\]
Thus, $E_l(\beta_{\Phi}) |_{\Gamma_l} = \beta_{\Phi} $ and  the extension is consistent.
\end{proof}

We now define a decoupled selection mechanism, again with the aim of constructing a basis for $\Hspace(\Gamma)$.

\begin{definition}[Decoupled selection mechanism]
    For each level $l$, assuming feasibility holds, we select the functions $\beta_{\Phi} \in\Dbasis(\Gamma_l)$ such that the corresponding $\beta$ have disjoint support from the interior of $\Gamma_k$ for $k<l$:
    \[
        \Hbasis_l^{\dec} \coloneqq \Bigl\{ \beta_{\Phi} \in \Dbasis(\Gamma_l) \ \Big|\ \supp(\beta) \cap \overset{\circ}{\Gamma}_k = \emptyset \:\: \forall k < l \Bigr\}.
    \]
\end{definition}

\begin{theorem}[Generalized decoupled hierarchical spline basis] \label{thm:decoupling linind}
	 Define the set
	\begin{equation}
		\label{eq:decoulpled-hierachical-basis}
		 \Hbasis_{\Gamma}^{\dec} \coloneqq \biggl\{ \beta_{\Gamma}^{\dec} \coloneqq (\beta_{\Phi,l})_{l=0}^{N-1} \ \bigg|\ \beta_{\Phi} \in \bigcup_{l=0}^{N-1} \Hbasis_l^{\dec} \biggr\},
	\end{equation}
where
for each $\beta_{\Phi} \in \Dbasis(\Gamma_l)$:
\begin{itemize}
	\item $\beta_{\Phi,k} \coloneqq 0$ for $k < l$,
	\item $\beta_{\Phi,l} \coloneqq \beta_{\Phi}$,
	\item $\beta_{\Phi,k} \coloneqq E_l(\beta_{\Phi})|_{\Gamma_{k}}$ for $k > l$.
	\end{itemize}	
    Under Assumption~\ref{ass:basis hierachichal setting} and if the decoupling is feasible for each level $l$, we have $ \Hbasis_{\Gamma}^{\dec} \subseteq \Hspace(\Gamma)$ and $ \Hbasis_{\Gamma}^{\dec}$ is linearly independent.
\end{theorem}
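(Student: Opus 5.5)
The argument mirrors the proof of Theorem~\ref{thm:linind}, with $\beta|_{\Gamma_k}$ replaced by the decoupled components $\beta_{\Phi,k}$; there are two parts, membership and linear independence.

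\textbf{Membership in $\Hspace(\Gamma)$.} Fix $\beta_\Phi \in \Hbasis_l^{\dec}$ with $\beta\in\Bbasis_l$. The components are well defined in the right spaces:
\begin{itemize}
\item $\beta_{\Phi,l} = \beta_\Phi \in \Sspace_l(\Gamma_l)$ by Theorem~\ref{thm:basis D(M)};
\item for $k>l$, $E_l(\beta_\Phi) \in \Sspace_{l+1}\subseteq \Sspace_k$ by nestedness, so its restriction to $\Gamma_k$ lies in $\Sspace_k|_{\Gamma_k}\subseteq \Sspace_k(\Gamma_k)$;
\item for $k<l$ the component is $0$.
\end{itemize}

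Next I check hierarchical contact between every pair of levels $m<k$, in three cases.

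\emph{Case $m,k \ge l$ with $m>l$.} Both components are restrictions of the single global function $E_l(\beta_\Phi)\in \Sspace_k$. Pieces of one global spline of level $k$ have contact w.r.t.\ level $k$ on any shared level-$k$ face, so contact holds.

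\emph{Case $m=l<k$.} By Lemma~\ref{lem:extension}, $\beta_{\Phi}=E_l(\beta_\Phi)|_{\Gamma_l}$. Hence on every $c_l\in\Gamma_l$ and $c_k\in\Gamma_k$ meeting in a level-$k$ face, both pieces are restrictions of $E_l(\beta_\Phi)\in\Sspace_k$. Contact again follows as in the previous case.

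\emph{Case $m<l$.} Here $\beta_{\Phi,m}=0$, so I must show that the level-$k$ component (with $k\ge l$) vanishes to order $r$ on faces shared with $\Gamma_m$. The selection rule gives $\supp(\beta)\cap\overset{\circ}{\Gamma}_m=\emptyset$. I need the stronger fact that $\supp(E_l(\beta_\Phi))\subseteq\supp(\beta)$, since then $E_l(\beta_\Phi)$ vanishes identically on $\Gamma_m$ and all its derivatives vanish on the common faces.

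\textbf{Main obstacle.} The inclusion $\supp(E_l(\beta_\Phi))\subseteq\supp(\beta)$ is the step I expect to be hardest. I would prove it using local linear independence (P1) of $\Bbasis_{l+1}$. If $\lambda^{l+1}_\gamma(\beta)\neq0$, then on any level-$(l+1)$ cell $c\subseteq\supp(\gamma)$ we have $\gamma_c\ne0$. The restriction $\beta_c$ has a unique expansion in $\Bbasis_{l+1,c}$ with nonzero $\gamma$-coefficient, so $\beta_c\neq 0$. Hence $\supp(\gamma)\subseteq\supp(\beta)$, and $E_l(\beta_\Phi)$, a sum of such $\gamma$, inherits the inclusion. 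This implicitly uses that supports are unions of cells, or at least that nonvanishing on a cell is detected cellwise, which I would state explicitly. Then $E_l(\beta_\Phi)$ vanishes on a neighbourhood of each face $c_m\cap c_k$ inside $c_k$ up to closure, so by continuity of derivatives in $C^p$ its derivatives vanish there. This gives the contact $0\sim \beta_{\Phi,k}$.

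\textbf{Linear independence.} I argue level by level as in Theorem~\ref{thm:linind}. Suppose $\sum \alpha\,\beta^{\dec}_\Gamma=0$ and, inductively, all coefficients attached to levels below $l$ vanish. Elements from levels above $l$ have zero $l$-th component. So the $\Gamma_l$-component reduces to $\sum_{\beta_\Phi\in\Hbasis_l^{\dec}}\alpha_{\beta,\Phi}\beta_\Phi=0$ in $\Sspace_l(\Gamma_l)$. Since $\Hbasis^{\dec}_l\subseteq\Dbasis(\Gamma_l)$, which is linearly independent by Theorem~\ref{thm:basis D(M)}, all these coefficients vanish. Induction on $l$ from $0$ to $N-1$ concludes the proof.
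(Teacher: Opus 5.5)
Your proof is correct in substance. For the pairs of levels $m<k$ with $m\ge l$, and for linear independence, it coincides with the paper's proof. The components are restrictions of the single spline $E_l(\beta_\Phi)\in\Sspace_{l+1}\subseteq\Sspace_k$, using Lemma~\ref{lem:extension} when $m=l$. Independence follows from the level-wise induction, which you correctly close with the linear independence of $\Dbasis(\Gamma_l)$ from Theorem~\ref{thm:basis D(M)}.

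The real difference is that you treat the pairs $m<l$, which the paper's proof never mentions. There the component is $0$ by definition. In Theorem~\ref{thm:linind} it is literally $\beta|_{\Gamma_m}$, but here the paper's one-line argument does not show that it agrees with $E_l(\beta_\Phi)|_{\Gamma_m}$. The reason is that $E_l(\beta_\Phi)$ is assembled from level-$(l+1)$ functions that the selection rule does not see. Your consequence of \ref{ass:basis P1} is exactly the missing ingredient: if $\lambda^{l+1}_\gamma(\beta)\neq 0$, then $\gamma$ vanishes on every level-$(l+1)$ cell on which $\beta$ vanishes. The same fact also handles the cells of $\Gamma_l$ outside $\Ggraph_\beta(\Gamma_l)$ in the proof of Lemma~\ref{lem:extension}, where feasibility alone says nothing.

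Two repairs are needed in that step.

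\emph{First}, do not upgrade the cellwise statement to $\supp(\gamma)\subseteq\supp(\beta)$. Supports need not be unions of cells in this framework (cf.\ the function $N_3$ in the example after Assumption~\ref{ass:basis}). You also do not need the inclusion. $\Gamma_m$ is a union of level-$(l+1)$ cells (grid nesting, which the paper also uses tacitly), and $\beta$ vanishes on each of them, so the cellwise fact already gives $E_l(\beta_\Phi)=0$ on $\Gamma_m$.

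\emph{Second}, your last sentence is misdirected. $E_l(\beta_\Phi)$ vanishes on the $\Gamma_m$ side of $c_m\cap c_k$, not inside $c_k$. Across that face it is only as smooth as a spline of the relevant level, not $C^p$.
\begin{itemize}
\item For $k>l$, note that the zero component now equals $E_l(\beta_\Phi)|_{\Gamma_m}$, so contact follows exactly as in your first case.
\item For $k=l$ (and for $l=N-1$, where $E_l$ is undefined), argue with $\beta$ itself. Each piece of $\beta_\Phi$ is $\beta_c$ or $0$, and $\beta\in\Sspace_l$ vanishes on $\Gamma_m$. Invoking $E_l(\beta_\Phi)$ here would only give level-$(l+1)$ smoothness, which can be lower than the level-$l$ contact order.
\end{itemize}
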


\begin{proof}
    Let us fix $l \in \{0,\dots,N-1\}$ and let $\beta_{\Phi} \in \Hbasis_l^{\dec} $.
    We first remark that the components of $\beta_{\Gamma}^{\dec} \in \Hbasis_{\Gamma}^{\dec}$ for $k \ge l$ are well defined because of the nestedness of the spaces and the consistency of the extension (Lemma~\ref{lem:extension}). Since $\beta_{\Phi,k}$ and $\beta_{\Phi,l}$ for $l<k$ are restrictions of the same global function $ E_l(\beta_{\Phi})$, $\beta_{\Gamma}^{\dec}$ naturally satisfies the hierarchical contact condition $\beta_{\Phi,l} \sim_{\h} \beta_{\Phi,k}$. This proves $\Hbasis_{\Gamma}^{\dec} \subseteq \Hspace(\Gamma)$.

    The proof of the linear independence follows the same inductive argument as the proof of Theorem~\ref{thm:linind}.
\end{proof}

\begin{theorem}[Completeness of the decoupled hierarchical spline basis]
\label{thm:decoupling complete}
    Under the hypothesis of Theorem~\ref{thm:decoupling linind},
    the generalized decoupled hierarchical spline basis $\Hbasis_{\Gamma}^{\dec}$ is complete, i.e., $\Span \Hbasis_{\Gamma}^{\dec} = \Hspace(\Gamma)$.
\end{theorem}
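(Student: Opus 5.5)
We argue by induction over the levels, mirroring the proof of Theorem~\ref{thm:hier_complete}, but with the bases $\Dbasis(\Gamma_l)$ of Theorem~\ref{thm:basis D(M)} in place of restrictions of $\Bbasis_l$, and with the extensions $E_l$ of \eqref{eq:extension} transporting coarse contributions to finer levels. Let $s = (s_0,\dots,s_{N-1}) \in \Hspace(\Gamma)$. At level $0$ there is no coarser level, so $\Hbasis_0^{\dec} = \Dbasis(\Gamma_0)$. Since $s_0 \in \Sspace_0(\Gamma_0)$, Theorem~\ref{thm:basis D(M)} gives coefficients with $s_0 = \sum_{\beta_\Phi \in \Dbasis(\Gamma_0)} \Lambda_{\beta,\Phi}\beta_\Phi$. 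Set $\sigma^{(0)} \coloneqq \sum \Lambda_{\beta,\Phi}\,\beta^{\dec}_\Gamma$. By Theorem~\ref{thm:decoupling linind}, $\sigma^{(0)} \in \Hspace(\Gamma)$, and its level-$0$ component equals $s_0$. Consequently $r^{(1)} \coloneqq s - \sigma^{(0)} \in \Hspace(\Gamma)$ has vanishing level-$0$ component.

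\textbf{Inductive step.} Suppose $r \in \Hspace(\Gamma)$ has components $r_k = 0$ for all $k<l$. We want to show $r_l \in \Span \Hbasis_l^{\dec}$, where $\Hbasis_l^{\dec}$ is identified with its level-$l$ components. Subtracting the corresponding elements of $\Hbasis_\Gamma^{\dec}$ then kills level $l$, and the induction continues until $r=0$, which proves $s \in \Span\Hbasis_\Gamma^{\dec}$. Expand $r_l \in \Sspace_l(\Gamma_l)$ in $\Dbasis(\Gamma_l)$ by Theorem~\ref{thm:basis D(M)}. It suffices to show that every $\beta_\Phi$ appearing with a nonzero coefficient satisfies $\supp(\beta)\cap\overset{\circ}{\Gamma}_k = \emptyset$ for all $k<l$.

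\textbf{Main obstacle.} The hierarchical contact of $r_l$ with $r_k = 0$ says that on every cell $c_l \in \Gamma_l$ meeting some $c_k \in \Gamma_k$ in a level-$l$ face $\tau$, the piece $r_{l,c_l}$ has contact with the zero function with respect to level $l$. We must convert this into vanishing of coefficients, for which I would use the extension machinery. Pick a level-$l$ cell $c' \subseteq c_k$ adjacent to $c_l$ across $\tau$; such a cell exists by the face condition in the definition of the multi-level multi-cell, and by nestedness $c'$ lies in $\Omega$ as a cell of $\Ggrid_l$. Apply Lemma~\ref{lem:coeff_match} in $\Sspace_l$ to the pair $u_{c_l} = r_{l,c_l}$, $u_{c'} = 0$. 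This gives $\lambda^{c_l}_\beta(r_{l,c_l}) = 0$ for every $\beta \in \Bbasis_l$ active on both $c_l$ and $c'$. The same coefficient is shared by the whole component $\Phi$ containing $c_l$, again by Lemma~\ref{lem:coeff_match} along paths, so $\Lambda_{\beta,\Phi}=0$.

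The delicate point is the remaining case: $\beta$ intersects $\overset{\circ}{\Gamma}_k$, but no cell of its component $\Phi$ is face-adjacent to a cell of $\Gamma_k$ inside $\supp\beta$. Here $\Phi$ is separated from $\Gamma_k$ within $\supp(\beta)$ by other cells of $\Gamma_l$ (belonging to other components) or by cells of other levels. I would handle this by walking inside the box $\supp(\beta)$ from $\Phi$ toward $\Gamma_k$. Where the path crosses coarser-level cells, I would propagate the vanishing through the intermediate levels using $r_m = 0$ for $m<l$. Where it crosses finer levels, I would use the contact with the extended coarse pieces. Alternatively, I would first try to show, using the feasibility condition together with the fact that $\supp(\beta)$ is a box, that such components cannot occur. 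I expect this topological step, rather than the algebra, to be where the real work lies; if it fails, one would need a mild extra geometric hypothesis. Finally, any $\beta_\Phi$ surviving with a nonzero coefficient then belongs to $\Hbasis_l^{\dec}$. Subtracting $\sum \Lambda_{\beta,\Phi}\beta^{\dec}_\Gamma$ yields a residual in $\Hspace(\Gamma)$ (Theorem~\ref{thm:decoupling linind}) that vanishes on levels $\le l$, which completes the induction.
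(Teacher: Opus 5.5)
Your reduction is the right one, and it is the same inductive residual argument the paper uses; the paper's proof just refers back to Theorem~\ref{thm:hier_complete}. Your first case is sound: there you apply Lemma~\ref{lem:coeff_match} to $r_{l,c_l}$ and the zero piece on a level-$l$ cell $c'\subseteq c_k$ across $\tau$. The genuine gap is the case you flagged, and it cannot be closed, because the statement is false under the stated hypotheses. Neither of your remedies can work. At level $l$ the finer components $r_m$, $m>l$, are not yet known to vanish, so a finer region between $\Phi$ and a coarser $\Gamma_k$ can absorb arbitrary Hermite data, and no vanishing propagates through it. Feasibility of decoupling cannot exclude the configuration either, since it is vacuous when every coefficient graph is connected.

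Here is a counterexample. Take univariate quadratic $C^1$ splines on $[0,4]$ with clamped knot vectors whose interior knots are $2^{-l}\mathbb{Z}\cap(0,4)$, $l=0,1,2$. Let $\Gamma_0$, $\Gamma_2$, $\Gamma_1$ consist of the level-$0$, level-$2$, level-$1$ cells of $[0,2]$, $[2,\tfrac{5}{2}]$, $[\tfrac{5}{2},4]$, respectively. This is the standard hierarchy $[0,4]\supseteq[2,4]\supseteq[2,\tfrac{5}{2}]$. The spaces are nested, the B-spline bases satisfy Assumption~\ref{ass:basis}, and polynomial pieces satisfy Assumption~\ref{ass:extension}. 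Each $\Gamma_l$ is an interval, so all coefficient graphs are connected and $\Hbasis^{\dec}_\Gamma=\Hbasis_\Gamma$. Here $\Hspace(\Gamma)$ is the $C^1$ quadratic spline space with interior knots $1,2,\tfrac{9}{4},\tfrac{5}{2},3,\tfrac{7}{2}$, which has dimension $9$. However:
\begin{itemize}
\item $\Hbasis_0$ has four elements, with supports $[0,1],[0,2],[0,3],[1,4]$;
\item $\Hbasis_1$ has four elements, with supports $[2,\tfrac{7}{2}],[\tfrac{5}{2},4],[3,4],[\tfrac{7}{2},4]$;
\item $\Hbasis_2=\emptyset$, because no level-$2$ support fits in $[2,\tfrac{5}{2}]$.
\end{itemize}
So $\Hbasis_\Gamma$ has only $8$ elements.

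The missing function is exactly your second case. Let $\gamma\in\Bbasis_1$ be the B-spline with knots $\tfrac{3}{2},2,\tfrac{5}{2},3$. Put $s_0=0$ and $s_1=\gamma|_{\Gamma_1}$. Let $s_2$ be the $C^1$ quadratic spline on $[2,\tfrac{5}{2}]$ with knot $\tfrac{9}{4}$ that has zero value and derivative at $2$ and agrees with $\gamma$ to first order at $\tfrac{5}{2}$; this Hermite problem is unisolvent. Then $s\in\Hspace(\Gamma)$, and $r_1=s_1$ contains $\gamma$ with coefficient $1$. But $\gamma$ is not selected, since $\supp(\gamma)\cap\overset{\circ}{\Gamma}_0\neq\emptyset$. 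Its only component $\{[\tfrac{5}{2},3]\}$ is separated from $\Gamma_0$ by $\Gamma_2$; in fact $\Gamma_0$ and $\Gamma_1$ do not touch at all. An element of $\Span\Hbasis_\Gamma$ with zero level-$0$ component has zero $\Hbasis_0$-coefficients. Hence its level-$1$ component lies in $\Span\{\beta|_{\Gamma_1}:\beta\in\Hbasis_1\}$, which does not contain $\gamma|_{\Gamma_1}$ by local linear independence on $[\tfrac{5}{2},3]$. Therefore $s\notin\Span\Hbasis^{\dec}_\Gamma$.

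The example also satisfies the hypotheses of Theorem~\ref{thm:hier_complete}. It shows that the paper's step ``the residual vanishes on the boundary shared with coarser levels, hence lies in the span of the selected functions'' is unjustified.

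An extra hypothesis is therefore genuinely needed. One sufficient choice is the following. For every $l$, every $\beta\in\Bbasis_l$ with $\supp(\beta)\cap\overset{\circ}{\Gamma}_k\neq\emptyset$ for some $k<l$, and every $\Phi\in K(\Ggraph_\beta(\Gamma_l))$, require that some $c_l\in\Phi$ meets some $c_k\in\Gamma_k$, $k<l$, in a level-$l$ face $\tau$. Require also that $\beta$ does not vanish on a level-$l$ cell $c'\subseteq c_k$ with $c'\cap c_l=\tau$. Under this assumption your first-case argument gives $\Lambda_{\beta,\Phi}=0$ for every non-selected $\beta_\Phi$, and the induction closes as you describe.
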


\begin{proof}
    The proof follows the same inductive argument as the proof of Theorem~\ref{thm:hier_complete}.
\end{proof}

\section{Tchebycheffian spline spaces} \label{sec:tchebycheffian spline spaces}

We now elaborate the results from Section~\ref{sec:generalized spline spaces} for the relevant class of splines that can be constructed by taking the spaces $\Uspace_i$ to be (extended complete) Tchebycheff spaces.

\subsection{Preliminaries on ECT-spaces}

In this section, we present the class of extended complete Tchebycheff spaces (ECT-spaces), a natural generalization of the polynomial setting \cite{lyche_tchebycheffian_2019,mazure_finding_2011,mazure_constructing_2018,schumaker_spline_2007}.
We begin by defining extended Tchebycheff spaces. Let $I$ be an interval of the real line.

\begin{definition}[Extended Tchebycheff spaces]
A $(p+1)$-dimensional subspace $\Uspace(I)$ of $C^{p}(I)$ is an  extended Tchebycheff space (\emph{ET-space}) on $I$ if 
any Hermite interpolation problem with $p+1$ data points in $I$, counting multiplicities, is unisolvent.
\end{definition}
\begin{remark}
	This definition implies that any non-trivial element of a $(p+1)$-dimensional ET-space admits at most $p$ zeros, counting multiplicities on $I$. Note that an ET-space on some interval $I$ is also an ET-space on any subinterval $J\subseteq I$.
\end{remark}

We identify the following relevant subclass of ET-spaces.

\begin{definition}[Extended complete Tchebycheff spaces]
    A $(p+1)$-dimensional subspace $\Uspace(I)$ of $C^{p}(I)$ is an  extended complete Tchebycheff space (\emph{ECT-space}) if there exists a basis  $\{u_{j}\}_{j=0}^{p}$ 
     of $\Uspace(I)$ such that, for all $k = 0, \dots, p$, 
    the subspace $\Span\{u_0, \dots, u_k\}$ is an ET-space on $I$.
     The basis 
    $\{u_{j}\}_{j=0}^{p}$ is called an ECT-system.
\end{definition}

An interesting approach to construct ECT-systems, and hence ECT-spaces, is based on a sequence $\{\omega_0, \dots, \omega_p\}$ of positive and smooth functions on $I$, called \emph{weight functions}, by means of repeated integration \cite{lyche_tchebycheffian_2019,mazure_finding_2011}.

\begin{definition}[Generalized power functions]
    Let $\{\omega_0, \dots, \omega_p\}$ be a sequence of weight functions on an interval $I$ satisfying
    \[
        \omega_j(x) > 0 \quad \text{and} \quad \omega_j \in C^{p-j}(I) \quad \text{for all } x \in I, \quad j=0,\dots,p.
    \]
    We define the generalized power functions $\{u_j\}_{j=0}^{p}$ by repeated integration as follows:
    \[
        \begin{aligned}
            u_0(x) &\coloneqq \omega_0(x),\\[4pt]
            u_1(x) &\coloneqq \omega_0(x)\int_{x_0}^x \omega_1(t_1)\dd t_1,\\[4pt]
            &\;\;\vdots\\[4pt]
            u_p(x) &\coloneqq \omega_0(x)\int_{x_0}^x \omega_1(t_1)
            \int_{x_0}^{t_1} \omega_2(t_2)
            \cdots
            \int_{x_0}^{t_{p-1}} \omega_p(t_p)\dd t_p\cdots \dd t_1,
        \end{aligned}
    \]
    where $x_0\in I$.
\end{definition}

It turns out that $\{u_0, \dots, u_p\}$ is an ECT-system on $I$.
The sequence $\{\omega_0, \dots, \omega_p\}$ is called a weight system generating the space $\Uspace(I) = \Span \{u_0,\dots,u_p\}$. ECT-systems are completely characterized by  weight systems.

\begin{theorem}[\cite{lyche_tchebycheffian_2019,schumaker_spline_2007}]
    The following statements are equivalent.
    \begin{enumerate}[label=(\roman*)]
        \item $\{u_0,\dots,u_p\}$ is an ECT-system on $I$.
        \item There exists a weight system generating the ECT-space $\Uspace(I) =\Span \{u_0,\dots,u_p\}$.
    \end{enumerate}
\end{theorem}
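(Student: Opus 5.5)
The plan is to prove the two implications separately. The direction (ii)$\Rightarrow$(i) is already covered by the remark after the definition of generalized power functions, so I would only sketch it. The real work is (i)$\Rightarrow$(ii), where a weight system has to be reconstructed from a given ECT-system by repeated differentiation and division by Wronskians.

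\textbf{(ii)$\Rightarrow$(i).} If $\{\omega_0,\dots,\omega_p\}$ generates $\Uspace(I)$, consider the generalized differential operators
\[
L_0 f \coloneqq f/\omega_0, \qquad L_j f \coloneqq \frac{1}{\omega_j}\, D L_{j-1} f \quad (j=1,\dots,p).
\]
By construction $L_j u_k$ is again a generalized power function of order $k-j$ built from $\omega_{j},\dots,\omega_k$. In particular $L_j u_k(x_0)=\delta_{jk}$ for $k\ge j$.

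To show that each $\Span\{u_0,\dots,u_k\}$ is an ET-space, I would use a generalized Rolle theorem. Suppose a nonzero $f$ in this span has $k+1$ zeros counting multiplicity. Since $\omega_0>0$, $L_0 f$ has the same zeros. Ordinary Rolle, applied between zeros together with the multiplicities, gives $DL_0 f$ at least $k$ zeros. Dividing by $\omega_1>0$ preserves them, so $L_1 f$ has at least $k$ zeros. Iterating, $L_k f$ has at least one zero. But $L_k f$ is a constant multiple of $\omega_k\cdot$const, i.e.\ $L_k f$ equals the coefficient of $u_k$, so this coefficient is $0$. Induction on $k$ then gives $f\equiv 0$. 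Unisolvence of Hermite interpolation follows because a square homogeneous system with only the trivial solution is invertible.

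\textbf{(i)$\Rightarrow$(ii).} I would define the Wronskians $W_k \coloneqq W(u_0,\dots,u_k)$ with $W_{-1}\coloneqq 1$. The first step is to show $W_k(x)\neq 0$ for every $x\in I$. If $W_k(x)=0$, some nontrivial combination in $\Span\{u_0,\dots,u_k\}$ has a zero of multiplicity $k+1$ at $x$, contradicting the ET property. By continuity and connectedness of $I$, each $W_k$ then has constant sign, and after replacing $u_k$ by $-u_k$ if needed we may take $W_k>0$. I would then define
\[
\omega_0 = W_0,\qquad \omega_1 = \frac{W_1}{W_0^2},\qquad \omega_k = \frac{W_k W_{k-2}}{W_{k-1}^2}\quad (k\ge 2).
\]
The classical Wronskian identity (Pólya–Mammana factorization) shows that these weights reproduce the $u_k$ through the $L_j$ operators, up to an invertible triangular change of basis. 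Fixing the integration constants at $x_0$ then yields a weight system generating the same space.

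The main obstacle is the smoothness requirement $\omega_j\in C^{p-j}(I)$. Since $W_k$ involves derivatives up to order $k$ of $C^p$ functions, it lies only in $C^{p-k}$, so this needs a careful count. I would first attempt it by induction using the recursive relation $\frac{d}{dx}\bigl(W(u_0,\dots,u_{k-1},f)/W_{k-1}\bigr)$-type identities. Alternatively, since this is a known result, cite \cite{lyche_tchebycheffian_2019,schumaker_spline_2007}.
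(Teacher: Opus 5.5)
The paper gives no proof of this theorem; it only cites the literature. Your outline is the classical argument from those references (generalized Rolle for (ii)$\Rightarrow$(i), Wronskian weights for (i)$\Rightarrow$(ii)) and is essentially sound. As written, however, your two directions use different readings of (ii).

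In (ii)$\Rightarrow$(i) you assume the $u_k$ \emph{are} the generalized power functions. In (i)$\Rightarrow$(ii) you only obtain power functions $\hat u_k$ related to the $u_k$ by a triangular change of basis. Neither extreme gives an equivalence:
\begin{itemize}
\item If (ii) only asks that the full span $\Uspace(I)$ be generated, then (ii)$\Rightarrow$(i) fails. Take $u_0=x$, $u_1=1$ on $[-1,1]$: they span $\mathbb{P}_1$, which is generated by $\{1,1\}$, but $\Span\{x\}$ is not an ET-space.
\item If (ii) asks that $u_k=\hat u_k$, then (i)$\Rightarrow$(ii) fails. Take $u_0=1$, $u_1=x+5$ on $[0,1]$: this is an ECT-system, but every $\omega_0\int_{x_0}^x\omega_1$ vanishes at $x_0\in I$.
\end{itemize}
State (ii) as $\Span\{u_0,\dots,u_k\}=\Span\{\hat u_0,\dots,\hat u_k\}$ for every $k$. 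Your Rolle argument then still proves (i), since being an ET-space depends only on the span.

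The step that carries all of (i)$\Rightarrow$(ii) is only named, so spell it out. Take your weights and set $L_0g=g/\omega_0$, $L_jg=\omega_j^{-1}DL_{j-1}g$.
\begin{itemize}
\item Induction on $k$ gives $L_kg=W(u_0,\dots,u_{k-1},g)/W_k$. The inductive step is the Jacobi--Sylvester identity $D\bigl(W(u_0,\dots,u_{k-2},g)/W_{k-1}\bigr)=W_{k-2}\,W(u_0,\dots,u_{k-1},g)/W_{k-1}^2$.
\item Hence $L_ku_k=1=L_k\hat u_k$.
\item Integrating $L_kg=0$ step by step shows $\ker L_k=\Span\{\hat u_0,\dots,\hat u_{k-1}\}$.
\item Therefore $u_k-\hat u_k\in\Span\{\hat u_0,\dots,\hat u_{k-1}\}$, and starting from $u_0=W_0=\omega_0=\hat u_0$ the nested spans agree.
\end{itemize}

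The smoothness you flag as the main obstacle is immediate. Only the last row of the matrix defining $W_k$ contains $k$-th derivatives, so $W_k\in C^{p-k}(I)$. The factors $W_{k-1}$ and $W_{k-2}$ are smoother, and $W_{k-1}$ never vanishes, so $\omega_k\in C^{p-k}(I)$.

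Finally, there is a slip in (ii)$\Rightarrow$(i). $L_ju_k$ is the power function of order $k-j$ built from $(1,\omega_{j+1},\dots,\omega_k)$, not from $(\omega_j,\dots,\omega_k)$. Likewise, $L_kf$ is just the constant coefficient $a_k$ of $u_k$ in $f=\sum_{j\le k}a_ju_j$, with no factor $\omega_k$. With that correction the Rolle count is fine, since multiplicities are preserved under division by positive weights by the Leibniz rule.
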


\begin{remark}
    A weight system generating an ECT-space is not unique. For example, for any positive constants $K_0,\dots,K_p$, the weight system $\{K_0 \omega_0,\dots,K_p \omega_p\}$ generates the same ECT-space as $\{\omega_0,\dots,\omega_p\}$.
\end{remark}

\begin{example}
    The weight system $\{\omega_j = 1\}_{j=0}^p$ generates the space of algebraic polynomials $\mathbb{P}_p$, which is an ECT-space on $\mathbb{R}$. For any fixed $x_0 \in \mathbb{R}$, the ECT-system obtained by repeated integration is given by
    \[
        \Bigl\{1,x-x_0,\dots,\frac{(x-x_0)^p}{p!}\Bigr\}.
    \]
     The obtained ECT-system agrees with the classical polynomial Taylor basis.
\end{example}

\begin{example}
	\label{ex:exp}
    Given distinct real numbers $\alpha_0<\dots<\alpha_p$, the weight functions 
    $$\omega_0 = e^{\alpha_0x}, \quad \omega_j = e^{(\alpha_j-\alpha_{j-1})x}, \quad j=1,\dots,p,$$
     generate the space
    \[
        \Span\bigl\{e^{\alpha_0x},\dots,e^{\alpha_px}\bigr\}.
    \]
    Hence, this is an ECT-space on any interval $I$ of the real line, in particular, for $I=\R$.
\end{example}

\begin{example}
	\label{ex:trig}
    For $\alpha> 0$ and $I = (-\frac{\pi}{2\alpha},\frac{\pi}{2\alpha})$, the weight functions
    $$\omega_0 = \cos(\alpha x), \quad \omega_1 = \frac{1}{\cos^2(\alpha x)},$$
    generate the space
    \[
        \Span\{\cos(\alpha x),\sin(\alpha x)\}.
    \]
    Hence, this is an ECT-space on $I = (-\frac{\pi}{2\alpha},\frac{\pi}{2\alpha})$.
\end{example}

We discuss next another construction of ECT-spaces based on the kernel of linear differential operators \cite{coppel_disconjugacy_1971,lyche_tchebycheffian_2019,schumaker_spline_2007}. The ECT-spaces in the examples above belong to this class.
Consider a linear differential operator $L$ defined on an interval $I$,
\[
    L \coloneqq D^{p+1} + \sum_{j=0}^{p}a_j(x)D^j, \quad \text{with} \quad a_{j}(x)\in C(I), \quad j=0,\dots,p+1.
\]
It is a well-known fact that $\Ker L \subseteq C^{p+1}(I)$
is a $(p+1)$–dimensional space.
We now connect $\Ker L$ with ET-spaces.

\begin{definition}
    The operator $L$ is said to be \emph{disconjugate} on $I$ if every non–trivial solution of $Lu=0$ has at most $p$ zeros on $I$, counting multiplicities.
\end{definition}

If $L$ is disconjugate on $I$, then its kernel is an ET-space on $I$. If the kernel of a linear differential operator $L$ is an ECT-space on $I$, then $L$ is disconjugate on $I$. However, the converse is not always true.

\begin{remark}
    It is known that for any disconjugate operator $L$ defined on a compact interval $I$, $\Ker L$ is an ECT-space on $I$. This is because the class of ECT-spaces on $I$ coincides with the larger class of ET-spaces on compact intervals \cite{coppel_disconjugacy_1971,lyche_tchebycheffian_2019}.
\end{remark}

The following result provides a characterization of linear differential operators whose kernels are ECT-spaces.

\begin{theorem}[\cite{coppel_disconjugacy_1971}]
    The kernel of a linear differential operator $L$ is an ECT-space on $I$ if and only if the operator can be represented as
    \[
        L = \omega_0 \omega_1 \cdots\omega_p \, D_p D_{p-1}\cdots D_0,
    \]
    where $\omega_0,\dots,\omega_p$ are positive functions on $I$, with
    \[
        \omega_j \in C^{p-j+1}(I), \quad \text{and} \quad D_jf \coloneqq D\left(\frac{f}{\omega_j}\right),
    \]
    for all $j=0,\dots,p$.
    Moreover, the functions $\omega_0,\dots,\omega_p$ form a  weight system generating the ECT-space $\Ker L$.
\end{theorem}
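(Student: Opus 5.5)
The plan is to prove the two implications separately. Both directions rest on one observation: two monic linear differential operators of order $p+1$ with continuous coefficients that share the same $(p+1)$-dimensional kernel must coincide.

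\emph{Sufficiency.} Let $L=\omega_0\omega_1\cdots\omega_p\,D_pD_{p-1}\cdots D_0$ with $\omega_j>0$ and $\omega_j\in C^{p-j+1}(I)$.
\begin{enumerate}
\item Expand the composition with the Leibniz rule. Each $D_j$ contributes one derivative and one division by $\omega_j$. The leading term is therefore $(\omega_0\cdots\omega_p)^{-1}D^{p+1}$, so after multiplying by $\omega_0\cdots\omega_p$ the operator is monic.
\item Check that the lower-order coefficients involve at most $p-j+1$ derivatives of $\omega_j$. Hence they are continuous, and $L$ has the admissible form.
\item Compute the kernel by peeling off one factor at a time: $D_p(\cdot)=0$ means $D_{p-1}\cdots D_0u=C\,\omega_p$, and so on. Repeated integration shows that $\Ker L$ is spanned by $\omega_0$, $\omega_0\int\omega_1$, $\dots$, where each iterated integral is shifted by arbitrary constants.
\item This is exactly the span of the generalized power functions built from $\{\omega_0,\dots,\omega_p\}$. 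By the statement preceding the theorem, these form an ECT-system, so $\Ker L$ is an ECT-space generated by this weight system. This also gives the final "moreover" claim.
\end{enumerate}

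\emph{Necessity.} Suppose $\Ker L$ is an ECT-space. By the equivalence theorem cited from \cite{lyche_tchebycheffian_2019,schumaker_spline_2007}, there is a weight system $\{\omega_j\}$ generating it.
\begin{enumerate}
\item Set $\tilde L\coloneqq \omega_0\cdots\omega_p\,D_p\cdots D_0$. By the sufficiency part, $\Ker\tilde L=\Ker L$ and $\tilde L$ is monic.
\item The difference $L-\tilde L$ has order at most $p$ and continuous coefficients $b_0,\dots,b_p$, and it annihilates a basis $u_0,\dots,u_p$ of $\Ker L$.
\item At each $x\in I$ this gives the linear system $\sum_j b_j(x)D^ju_i(x)=0$ for $i=0,\dots,p$. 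Its matrix is the Wronskian matrix of $u_0,\dots,u_p$ at $x$.
\item That matrix is nonsingular. Indeed, Hermite interpolation at the single point $x$ with multiplicity $p+1$ is unisolvent in an ET-space, and this is equivalent to the Wronskian being nonzero.
\item Hence every $b_j\equiv0$ and $L=\tilde L$.
\end{enumerate}

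\emph{Main obstacle: regularity of the weights.} The step I expect to be hardest is showing $\omega_j\in C^{p-j+1}(I)$. The equivalence theorem only guarantees $\omega_j\in C^{p-j}$, which is one order less than needed for $\tilde L$ to have continuous coefficients. I would fix this using explicit Wronskian formulas.
\begin{enumerate}
\item Choose an ECT-system $u_0,\dots,u_p$ of $\Ker L\subseteq C^{p+1}(I)$.
\item Write $W_k\coloneqq W(u_0,\dots,u_{k-1})$, which is positive after choosing suitable signs, with $W_0\coloneqq1$.
\item The weights are given by $\omega_0=W_1$ and $\omega_j=W_{j+1}W_{j-1}/W_j^2$. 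This is the standard Pólya–Mammana factorization and can be verified by induction on the repeated-integration formula.
\item Since every $u_i\in C^{p+1}$, the Wronskian $W_k$ involves derivatives up to order $k-1$, so $W_k\in C^{p+2-k}$.
\item Therefore $\omega_j$, whose worst factor is $W_{j+1}$, lies in $C^{p-j+1}$, as required.
\end{enumerate}
If the index bookkeeping in these formulas turns out to be delicate, I would instead work inductively. Divide by $\omega_0=u_0$ and differentiate, $D_0u=(u/u_0)'$. This maps $\Ker L$ onto a $p$-dimensional ECT-space, which is the kernel of a monic order-$p$ operator whose coefficients are one order less smooth. Then apply the induction hypothesis to that operator.
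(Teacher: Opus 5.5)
Your proposal is correct. The paper does not prove this statement; it quotes it from Coppel. The classical argument behind it is the P\'olya--Mammana factorization through the Wronskians $W_k$, which is essentially your route. For sufficiency, you compute the kernel of the factored operator by peeling off one $D_j$ at a time. For necessity, you build admissible weights and use that a monic operator with continuous coefficients is determined by its $(p+1)$-dimensional kernel, which is exactly your Wronskian-matrix step.

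One simplification is worth noting: the regularity issue you single out as the main obstacle is not really one.
\begin{itemize}
\item Let $\{\omega_0,\dots,\omega_p\}$ be \emph{any} weight system generating $\Ker L\subseteq C^{p+1}(I)$. Its generalized power functions $u_j$ belong to $\Ker L$, so $u_j\in C^{p+1}(I)$.
\item The weights are recovered from them as $\omega_0=u_0$ and $\omega_j=D_{j-1}\cdots D_0u_j$.
\item By induction on $j$, using $\omega_i\in C^{p-i+1}(I)$ for $i<j$, each $D_i$ lowers smoothness by exactly one. Hence $\omega_j\in C^{p-j+1}(I)$ automatically.
\end{itemize}
So the weight system supplied by the equivalence theorem already has the required smoothness. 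The explicit formulas $\omega_j=W_{j+1}W_{j-1}/W_j^2$ are correct, but they are not needed.

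This observation is essentially your fallback argument, stated directly without passing through intermediate operators. In that fallback, what drops by one order is the smoothness of the kernel functions ($C^{p+1}\to C^{p}$), not that of the coefficients. The reduced operator only needs continuous coefficients.
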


The next examples make use of this theorem to construct ECT-spaces from linear differential operators with constant coefficients. Consider a linear differential operator with constant coefficients,
\[
    L \coloneqq D^{p+1} + \sum_{k=0}^{p}a_j D^j,
\]
and its characteristic polynomial,
\begin{equation}
	\label{eq:pol-car}
    p_L(\xi) \coloneqq \xi^{p+1} + \sum_{k=0}^{p}a_j \ \xi^j.
\end{equation}

\begin{example}
    If $p_L$ has distinct real roots, namely $\alpha_0 < \dots < \alpha_p$,    then $L$ admits the following factorization on any real interval $I$:
    \[
        L =(D-\alpha_0)\cdots(D-\alpha_p) =e^{\alpha_px} D_p\cdots D_0,
    \]
    where $D_jf=D(f\,e^{-(\alpha_j-\alpha_{j-1})x})$ for all $j=1,\dots,p$ and $D_0f=D(f\,e^{-\alpha_0x})$.
    Hence,
    \[
        \Ker L = \Span\{e^{\alpha_0x},\dots,e^{\alpha_px}\}
    \]
    is an ECT-space on any real interval $I$; see also Example~\ref{ex:exp}.
\end{example}

\begin{example}
    Suppose $L = D^2 + \alpha^2$, with $\alpha > 0$. Then,
    \[
        L = \frac{1}{\cos(\alpha x)}D\left(\cos^2(\alpha x)D\left(\frac{f}{\cos(\alpha x)}\right)\right), \quad x \in \left(-\frac{\pi}{2\alpha},\frac{\pi}{2\alpha}\right).
    \]
    It is well known that
    \[
        \Ker(D^2+\alpha^2) = \Span\{\cos(\alpha x), \sin(\alpha x)\},
    \]
    corresponding to the roots $\pm \im \alpha$ of \eqref{eq:pol-car}. Hence, this space is an ECT-space on $(-\frac{\pi}{2\alpha},\frac{\pi}{2\alpha})$; see also Example~\ref{ex:trig}. Actually, it has been shown that this is an ECT-space on any interval with length less than $\frac{\pi}{\alpha}$ \cite{beccari_critical_2020,carnicer_critical_2003}.
\end{example}

\subsection{Tchebycheffian spline space on a multi-cell}

We begin by recalling a classical existence result of a global basis for the Tchebycheffian spline space in the univariate setting using the notation defined in Section~\ref{sec:generalized spline spaces}. We assume that each local space $\Uspace_i$ is an ECT-space generated by a  weight system $\boldsymbol{\omega}_i \coloneqq \{\omega_{i,0}, \dots, \omega_{i,p}\}$ defined on $[t_i,t_{i+1}]$ for $i=0,\dots ,n$. To ensure that these local spaces can be glued together to form a Tchebycheffian spline space $\Sspace(\{\Uspace_i\}_{i=0}^{n}, T, \mathbf{r})$ with a global B-spline-like basis, we impose compatibility conditions on the weights across the interior points.

\begin{definition}[Compatible weight system]
    The collection of local weight systems $\{ \boldsymbol{\omega}_i \}_{i=0}^{n}$ is said to be compatible with respect to the interior point sequence $T \coloneqq \{t_i\}_{i=1}^{n}$ and regularity vector $\mathbf{r} \coloneqq (r_1,\dots,r_n)$ if the following regularity conditions hold at every interior point $t_i$, $i=1,\dots,n$:
    \[
        D^k \omega_{i-1,j}(t_i) = D^k \omega_{i,j}(t_i), \quad \text{for } k=0, \dots, \max(r_i-j ,\, -1), \quad j=0,\dots,p.
    \]
\end{definition}

Under this compatibility assumption, we can define a system of \emph{global weight functions} $\boldsymbol{\omega} \coloneqq (\omega_0, \dots, \omega_p)$ on the entire domain $I$ by concatenating the local weights:
\[
    \omega_j(x) \coloneqq \omega_{i,j}(x), \quad \text{for } x \in [t_i, t_{i+1}).
\]

\begin{theorem}[\cite{buchwald_bsplines_2003,lyche_tchebycheffian_2019}]\label{thm:ect_assumption_basis}
    Given a compatible system of local weights, there exists a Tchebycheffian B-spline-like basis for the corresponding univariate spline space $\Sspace(\{\Uspace_i\}_{i=0}^{n}, T, \mathbf{r})$. This basis satisfies Assumption~\ref{ass:basis}.
\end{theorem}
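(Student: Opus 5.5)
The plan is to reduce the statement to the classical construction of Tchebycheffian B-splines through generalized derivatives, following \cite{buchwald_bsplines_2003,lyche_tchebycheffian_2019}. With the compatible weights I will define the global weights $\omega_0,\dots,\omega_p$ and the operators $D_j f \coloneqq D(f/\omega_j)$. Compatibility says that $\omega_{i-1,j}$ and $\omega_{i,j}$ agree to order $r_i-j$ at $t_i$. By induction on $j$, this makes the contact condition $D^k s_{i-1}(t_i)=D^k s_i(t_i)$ for $k\le r_i$ equivalent to continuity at $t_i$ of the generalized derivatives $L_k s \coloneqq D_{k-1}\cdots D_0 s$ for $k\le r_i$. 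The spline space is thereby identified with the space of functions on $[a,b]$ whose pieces lie in the $\Uspace_i$ and whose generalized derivatives $L_k$ are continuous at $t_i$ for $k\le r_i$.

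Next I will construct the basis explicitly. I will form the extended knot vector in which each $t_i$ is repeated $p-r_i$ times, with $p+1$ boundary knots at $a$ and at $b$. By the dimension count, $\dim \Sspace = (p+1)+\sum_i (p-r_i)$, which is the number of B-splines.

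For each window of $p+2$ consecutive knots $\xi_j\le\dots\le\xi_{j+p+1}$, I will define $N_j$ recursively in the degree. The degree-zero functions are characteristic functions multiplied by $\omega_p$ (or, equivalently, weighted integrals). The recursion integrates with respect to the weights and normalizes, in the style of the Mazure/Lyche--Mazure--Schumaker construction. The integration step uses the reduced weight systems $\{\omega_1,\dots,\omega_p\}$, whose ECT-spaces are $D_0\Uspace_i$.

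For these functions I will establish three properties:
\begin{enumerate}[label=(\alph*)]
\item each $N_j$ belongs to $\Sspace$;
\item $\supp N_j = [\xi_j,\xi_{j+p+1}]$, which gives \ref{ass:basis P2};
\item the $N_j$ are positive on the interior of their support.
\end{enumerate}

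The main step is local linear independence \ref{ass:basis P1}. On a cell $I_i$ exactly $p+1$ of the $N_j$ are nonzero, and $\dim \Uspace_i = p+1$. So it suffices to show that these restrictions span $\Uspace_i$, or equivalently that they are linearly independent there.

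For this I will argue by zero counting. Suppose $\sum c_j N_j$ vanishes on $I_i$. The spline $s=\sum c_j N_j$, restricted to the union of the supports involved, vanishes on an interval. I will use the ECT zero bound: a nontrivial Tchebycheffian spline with $m$ coefficients has at most $m-1$ zeros counting multiplicities. This gives $c_j=0$. The zero bound itself will come from a Rolle-type argument applied to $D_0 s$, which is a spline built from the reduced weight system. The induction on $p$ goes through because reduced weight systems of compatible systems are again compatible.

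Finally, I will show that the $N_j$ span $\Sspace$. This follows from their linear independence, obtained from the local linear independence just proved, together with the dimension count.

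I expect the delicate point to be the initial equivalence between ordinary and generalized derivative continuity. It requires $\omega_j\in C^{p-j}$ and the exact compatibility orders $r_i-j$. I will handle it by induction on $k$ using the Leibniz rule, citing \cite{lyche_tchebycheffian_2019} where the bookkeeping becomes heavy.

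The tensor-product version then follows immediately. Products of univariate bases are locally linearly independent on product cells, because the restricted spaces are themselves tensor products. Their supports are products of intervals.
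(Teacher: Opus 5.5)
The paper does not prove this statement; it quotes it from \cite{buchwald_bsplines_2003,lyche_tchebycheffian_2019}. Most of your plan follows the standard construction behind those references:
\begin{itemize}
\item the equivalence of ordinary and generalized-derivative continuity under the compatibility orders $r_i-j$;
\item the knot vector with $t_i$ repeated $p-r_i$ times;
\item the dimension count;
\item the normalized integral recurrence through the reduced weights;
\item the passage to tensor products.
\end{itemize}

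There is, however, a gap in the step you call the main one, local linear independence. The zero bound you invoke is false as stated. Any B-spline whose support is a proper subinterval of $[a,b]$ is a nontrivial element of $\Sspace$, yet it vanishes on a whole interval. A correct zero count must assign multiplicities to zero intervals, as in \cite{schumaker_spline_2007}. It is then stated in one of two forms, and neither closes your argument:
\begin{itemize}
\item For $s\not\equiv 0$ as a function. This only upgrades ``$s=0$ on $I_i$'' to ``$s=0$ on the union of the supports''. It gives $c_j=0$ only if you already know that $N_{m-p},\dots,N_m$ are linearly independent there. But you plan to deduce global independence from local independence, so this is circular.
\item For a nonzero coefficient vector. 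This version already contains the claim.
\end{itemize}
The Rolle/induction mechanism also cannot close on its own. Since $D_0$ kills $\omega_0$, after differentiating you only learn that the differences $c_j-c_{j-1}$ vanish. A nonzero constant coefficient sequence is left undetermined.

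The missing ingredient is a pair of identities that follow directly from the normalized recurrence $N_j=\omega_0\,(F_j-F_{j+1})$, where $F_j(x)=\tilde d_j^{-1}\int_{\xi_j}^{x}\tilde N_j(y)\dd y$:
\[
D_0 N_j=\frac{\tilde N_j}{\tilde d_j}-\frac{\tilde N_{j+1}}{\tilde d_{j+1}}, \qquad \sum_{j=m-p}^{m} N_j=\omega_0 \ \text{ on } [\xi_m,\xi_{m+1}].
\]
Here $\tilde N_j$ are the B-splines of the reduced weight system $\{\omega_1,\dots,\omega_p\}$ on the same knots, and $\tilde d_j=\int\tilde N_j$. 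With these, local linear independence follows directly:
\begin{itemize}
\item Suppose $s=\sum_{j=m-p}^{m}c_jN_j$ vanishes on a cell contained in $[\xi_m,\xi_{m+1}]$. Then $D_0s=\sum_{j=m-p+1}^{m}(c_j-c_{j-1})\tilde N_j/\tilde d_j$ vanishes there.
\item Apply the induction hypothesis to the $p$ reduced B-splines active on that cell. The induction must run in a class that allows regularity $r_i-1=-1$, i.e.\ discontinuities. It gives $c_{m-p}=\dots=c_m=c$.
\item Then $s=c\,\omega_0$ on the cell, and $\omega_0>0$ forces $c=0$.
\end{itemize}
With this replacing the zero-counting step, the rest of your argument goes through: global independence, spanning by the dimension count, and the tensor-product extension.
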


Consequently, a global basis $\Bbasis_{\ect}$ for the tensor-product Tchebycheffian spline space is obtained as the tensor product of the univariate Tchebycheffian B-spline-like bases from the theorem above. This construction ensures the existence of a global basis with the required properties of Assumption~\ref{ass:basis} for any Tchebycheffian spline space derived from compatible local ECT-systems.
Let $\Ggrid$ be the multi-dimensional tensor grid associated with the tensor-product Tchebycheffian spline space. The local function space on a cell $I_{\mathbf{i}}$ is the tensor product of the univariate local ECT-spaces:
\[
    \Uspace_{\mathbf{i}} \coloneqq \bigotimes_{j=1}^{d} \Uspace_{i_j}^j.
\]
In the rest of this section, we denote the Tchebycheffian spline space by $\Sspace_{\ect}$ and the Tchebycheffian spline space on a multi-cell $M$ by $\Sspace_{\ect}(M)$.

With the global basis established, we verify the second requirement: Assumption~\ref{ass:extension}.

\begin{theorem} \label{thm:ect_assumption_verification}
    Assumption~\ref{ass:extension} is satisfied whenever the local spaces $\Uspace_c$ for $c\in \Ccell$ are ECT-spaces.
\end{theorem}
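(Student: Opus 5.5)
We first treat the univariate case and then lift it to the tensor-product setting. In one dimension, let $c=I_{i-1}$ and $c'=I_i$ share the point $t_i$, and let $u_{i-1}\in\Uspace_{i-1}$, $u_i\in\Uspace_i$ satisfy $D^k u_{i-1}(t_i)=D^k u_i(t_i)$ for $k=0,\dots,r_i$. We must extend the pair to a spline on $[a,b]$.

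We build the extension one cell at a time, moving outward. Assume $s_j\in\Uspace_j$ is already fixed. On the next cell $I_{j+1}$ we need some $s_{j+1}\in\Uspace_{j+1}$ with prescribed derivatives $D^k s_{j+1}(t_{j+1})=D^k s_j(t_{j+1})$ for $k=0,\dots,r_{j+1}$. Because $\Uspace_{j+1}$ is an ECT-space of dimension $p+1$ on $I_{j+1}$, it is an ET-space there. Hence the Hermite interpolation problem with the point $t_{j+1}$ of multiplicity $p+1$ is unisolvent. Since $r_{j+1}\le p$, we can complete the $r_{j+1}+1$ prescribed data with arbitrary further derivative values (say zero) up to order $p$. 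This yields a unique solution, so a compatible $s_{j+1}$ always exists. The same argument, mirrored, extends to the left from $c$. Thus no obstruction of the kind in Example~\ref{EX2.10} can occur. That example failed because the neighboring space $\Span\{1,(x-1)^2\}$ is not an ET-space: the Hermite problem at $1$ with $v(1),v'(1)$ is not solvable. The univariate key fact is therefore: every $u\in\Uspace_j$ extends to a global univariate spline agreeing with $u$ on $I_j$. We can equally phrase this as $\Uspace_j^*=\Uspace_j$, by choosing the extension operator to be linear.

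For $d>1$, the cells $c$ and $c'$ meet in a face $\tau$. In each direction $\ell$, the index sets either coincide, $i_\ell=i'_\ell$, or are adjacent, $i'_\ell=i_\ell+1$, with a shared point $t^\ell$. We define a linear univariate operator $E^\ell$ in each direction:
\begin{itemize}
  \item if $c$ and $c'$ coincide in direction $\ell$, $E^\ell$ extends a function from the single interval $I^\ell_{i_\ell}$;
  \item if they are adjacent, $E^\ell$ extends a $C^{r}$-compatible pair on $I^\ell_{i_\ell}\cup I^\ell_{i_\ell+1}$, where $r$ is the regularity at $t^\ell$, to all of $[a_\ell,b_\ell]$.
\end{itemize}
In both cases we use the outward Hermite construction with fixed (zero) completion data, so $E^\ell$ is linear.

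We then set $s=(E^1\otimes\cdots\otimes E^d)(U)$, where $U$ is the piecewise function given by $u_c$ on $c$ and $u_{c'}$ on $c'$. The main obstacle is to show that $U$ lies in the tensor product of the univariate "two-cell" spline spaces in the adjacent directions, so that the tensor-product operator can act on it. For this we work in the product of the local tensor bases. In each adjacent direction $\ell$, we pick a basis of the univariate two-cell space $\Sspace^\ell(I^\ell_{i_\ell}\cup I^\ell_{i_\ell+1})$. We complement it by functions supported on one cell whose derivatives of order up to $r$ vanish at $t^\ell$; their existence again follows from the ET property.

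Expanding $u_c$ and $u_{c'}$ in the corresponding local tensor bases, the contact condition forces the following. Mixed derivatives $\partial^{\mathbf{k}}$ with $k_\ell\le r$ in the adjacent directions must agree on $\tau$. Using Hermite unisolvence in the directions where $\tau$ is a full interval, this gives equality of coefficients of the two-cell-space basis parts, i.e., $U\in\bigotimes_\ell(\text{two-cell or one-cell space})$. We expect this coefficient-matching step to need the most care; the unisolvence of the ET-spaces is what makes the functionals $D^k(\cdot)(t^\ell)$ independent.

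Finally, applying the tensor-product extension gives an element of $\Sspace=\bigotimes_\ell \Sspace^\ell$ whose restrictions to $c$ and $c'$ are $u_c$ and $u_{c'}$, because each $E^\ell$ reproduces its input on the original cells. This verifies Assumption~\ref{ass:extension}.
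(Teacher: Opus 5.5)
Your proof is correct. Its univariate core is the same as the paper's: extend cell by cell outward, prescribe the $r+1$ derivatives at the shared knot, complete the Hermite data arbitrarily, and use unisolvence in the ET-space to get existence. You complete with higher derivatives at the same knot, while the paper uses data at the other endpoint of the cell; both are admissible Hermite problems on the closed cell.

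The genuine difference is the passage from $d$ variables to one. The paper argues that $\Uspace_c$ is spanned by products, so ``by linearity'' it suffices to extend product functions, after which the problem ``decouples'' direction by direction. But contact is a condition on the pair $(u_c,u_{c'})$. Expanding $u_c$ and $u_{c'}$ separately into products does not yield product pairs that are themselves in contact, so this reduction is only asserted there.

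Your argument supplies what is missing. Take Hermite bases $\{\phi_k\}$, $\{\psi_k\}$ at the shared knot in each direction where the cells are adjacent.
\begin{itemize}
\item Contact forces equality of the coefficients for exactly those multi-indices with $k_\ell\le r^\ell$ in every adjacent direction. These coefficients are functions of the remaining variables, and there plain linear independence of the common local basis suffices; unisolvence is not needed.
\item Every other term can be carried by a one-sided factor $(\phi_k,0)$ or $(0,\psi_k)$ with $k>r^\ell$.
\end{itemize}
Hence the pair is the restriction of an element of the local tensor product of univariate two-cell and one-cell spline spaces. The tensor product of your linear univariate extension operators then finishes the proof. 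The paper's version is shorter; yours actually establishes the multivariate reduction.

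Two phrasing points need fixing. First, the one-sided functions belong to the two-cell space, whose basis is the $r+1$ coupled pairs $(\phi_k,\psi_k)$ together with the $2(p-r)$ one-sided functions, so they do not ``complement'' it. Second, when $c$ and $c'$ differ in two or more directions, the local box has more than two cells. You should therefore say that $U$ is the restriction to $c\cup c'$ of such a tensor-product element, not that $U$ lies in it.
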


\begin{proof}
    We construct the extension inductively, relying on the tensor-product structure of the space and the unisolvence of Hermite interpolation in univariate ECT-spaces.

    \medskip
    \noindent \textbf{Reduction to the univariate setting:}
    Since the local space $\Uspace_c$ is a tensor product of univariate spaces $\Uspace^j$, it is spanned by basis functions of the form
    \[
        u_c(\mathbf{x}) \coloneqq \prod_{j=1}^d u_c^j(x_j), \quad  u_c^j \in \Uspace^j.
    \]
    By linearity, it suffices to construct extensions for elements in this form.
    The contact condition involves matching mixed partial derivatives $D^{\mathbf{k}} u_c = D^{\mathbf{k}} u_{c'}$ for all multi-indices $\mathbf{k} \coloneqq (k_1,\dots,k_d)$ prescribed by regularity conditions. Due to the product structure, the derivatives factorize as
    \[
        D^{\mathbf{k}} u_c(\mathbf{x}) = \partial_{x_1}^{k_1} u_c^1(x_1)\, \cdots\, \partial_{x_d}^{k_d} u_c^d(x_d).
    \]
    Consequently, the multivariate extension problem decouples into independent univariate extension problems in each direction. 

    \medskip
    \noindent \textbf{Univariate extension construction:}
    Consider adjacent cells corresponding to the intervals $I_i=[t_{i}, t_{i+1}]$ and $I_{i+1}=[t_{i+1}, t_{i+2}]$. To construct an extension $u_{\mathrm{ext}}$ to $I_{i+2}$:
    \begin{enumerate}
        \item \textbf{Fixed constraints:} The continuity at $t_{i+2}$ fixes the Hermite data (derivatives up to order $r_{i+2}$) at the left endpoint of $I_{i+2}$.
        \item \textbf{Free degrees of freedom}: We assign arbitrary values (e.g., zeros) to the remaining degrees of freedom at the right endpoint $t_{i+2}$ to match the dimension $p+1$.
        \item \textbf{Unisolvence:} Since $\Uspace_{i+2}$ is an ECT-space, the Hermite interpolation problem is unisolvent. Thus, a local section $u_{\mathrm{ext}} \in \Uspace_{i+2}$ exists.
    \end{enumerate}
    Repeating this process defines the global spline $s$.
\end{proof}

\begin{remark}
    ECT-spaces represent a particularly well-behaved class of spaces in the sense of Assumption~\ref{ass:extension}: they satisfy the extension property regardless of the regularity constraints imposed on the spline space.
\end{remark}

The Tchebycheffian spline space satisfies Assumptions~\ref{ass:basis} and \ref{ass:extension}. Thus, the general framework in Section~\ref{ssec:space multi-cell} applies directly.

\begin{corollary}
\label{cor:D(M) ECT}
    The set
    \[
        \Dbasis_{\ect}(\Mcell) \coloneqq \Bigl\{ \beta_{\Phi} \coloneqq (\beta_{\Phi,c})_{c \in \Mcell} \ \Big|\ \beta \in \Bbasis_{\ect}, \ \supp(\beta) \cap  \overset{\circ}\Mcell \neq \emptyset,\ \Phi \in K(\Ggraph_{\beta}) \Bigr\}
    \]
    forms a basis for $\Sspace_{\ect}(\Mcell)$. Under Assumption~\ref{ass:connectivity}, this set simplifies to
    \[
        \Bigl\{ \beta|_{\Mcell} = (\beta_c)_{c \in \Mcell} \ \Big|\ \beta \in \Bbasis_{\ect}, \ \supp(\beta) \cap  \overset{\circ}\Mcell \neq \emptyset \Bigr\}.
    \]
\end{corollary}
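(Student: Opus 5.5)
The plan is to obtain Corollary~\ref{cor:D(M) ECT} as a direct instance of Theorem~\ref{thm:basis D(M)} and Corollary~\ref{cor:basis D(M)}. Both results hold for any tensor-product generalized spline space $\Sspace$ that meets Assumptions~\ref{ass:basis} and~\ref{ass:extension}. The second one needs Assumption~\ref{ass:connectivity} in addition. So the work consists of checking these hypotheses for $\Sspace_{\ect}$ with the basis $\Bbasis_{\ect}$, and then reading off the two conclusions.

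\textbf{Assumption~\ref{ass:basis}.} By Theorem~\ref{thm:ect_assumption_basis}, every univariate Tchebycheffian spline space built from compatible weights has a B-spline-like basis with local linear independence and compact interval supports. I then pass to $\Bbasis_{\ect}=\bigotimes_j \Bbasis^j$, with two checks.
\begin{itemize}
\item The support of $\beta=\prod_j\beta^j(x_j)$ equals $\prod_j\supp(\beta^j)$, which is a box in $[\mathbf{a},\mathbf{b}]$. This gives \ref{ass:basis P2}.
\item On a cell $I_{\mathbf{i}}$, the tensor products that do not vanish identically are exactly the products of univariate functions that are nonzero on the corresponding $I^j_{i_j}$. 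A tensor product of linearly independent families is linearly independent, which gives \ref{ass:basis P1}.
\end{itemize}
Spanning follows from the definition of $\Sspace_{\ect}$ as a tensor product space.

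\textbf{Assumption~\ref{ass:extension}.} This is Theorem~\ref{thm:ect_assumption_verification}, which I invoke directly.

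\textbf{Conclusions.} With both assumptions in place, Theorem~\ref{thm:basis D(M)} applied to $\Sspace=\Sspace_{\ect}$ and $\Bbasis=\Bbasis_{\ect}$ yields the first claim verbatim. If Assumption~\ref{ass:connectivity} also holds, each $\Ggraph_\beta(\Mcell)$ has a single component. Then $\beta_\Phi=\beta|_{\Mcell}$, and Corollary~\ref{cor:basis D(M)} gives the simplified set.

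\textbf{Expected obstacle.} The only non-bookkeeping step is the local linear independence of the tensor-product basis. I would prove it by restricting to a cell and using that, if $\sum_{\mathbf{j}} a_{\mathbf{j}}\prod_m \beta^m_{j_m}(x_m)=0$ on the box, freezing all variables but one and applying univariate local linear independence inductively forces every $a_{\mathbf{j}}=0$. Everything else is a direct citation of earlier results.
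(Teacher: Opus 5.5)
Your proposal is correct and follows the paper's route. The paper gives no separate argument beyond noting that $\Sspace_{\ect}$ satisfies Assumption~\ref{ass:basis} (via Theorem~\ref{thm:ect_assumption_basis} and tensorization) and Assumption~\ref{ass:extension} (via Theorem~\ref{thm:ect_assumption_verification}), so that Theorem~\ref{thm:basis D(M)} and Corollary~\ref{cor:basis D(M)} apply directly; your check that the tensor-product basis keeps local linear independence and box-shaped supports fills in a step the paper only asserts.
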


\subsection{Tchebycheffian hierarchical spline space}

In this section, we apply the general construction of the hierarchical basis to the Tchebycheffian setting. Let $\{\Sspace_{\ect,l}\}_{l=0}^{N-1}$ be a sequence of nested tensor-product Tchebycheffian spline spaces with the associated grids $\{\Ggrid_l\}_{l=0}^{N-1}$.
As established, for each level $l$, the space $\Sspace_{\ect,l}$ admits a tensor-product basis $\Bbasis_{\ect,l}$ composed of Tchebycheffian B-spline-like functions constructed from compatible weight systems. The framework developed above for generalized hierarchical spline spaces naturally includes the Tchebycheffian setting as a special case. Using the setting presented in Section~\ref{ssec:hierarchical space}, we introduce the space:
\[
    \Hspace_{\ect}(\Gamma)
    \coloneqq
    \Bigl\{
    s \coloneqq (s_l)_{l=0}^{N-1}
    \ \Big|\
    s_l\in \Sspace_{\ect,l}(\Gamma_l),
    \
    s_l \sim_{\h} s_k
    \ \forall\,l<k
    \Bigr\},
\]
which we call the \emph{Tchebycheffian hierarchical spline space}. By construction, $\Hspace_{\ect}(\Gamma)$ is a particular instance of the generalized hierarchical spline space $\Hspace(\Gamma)$ obtained when the local function spaces are tensor products of ECT-spaces.

Since Assumptions~\ref{ass:basis} and \ref{ass:extension} are satisfied in the Tchebycheffian context, we can simplify Assumption~\ref{ass:basis hierachichal setting} as follows.

\begin{assumption}[Hierarchy of Tchebycheffian spline spaces] \label{ass:basis Tchebycheffian hierachichal setting}
    The sequence $\{\Sspace_{\ect,l}\}_{l=0}^{N-1}$ satisfies the following:
    \begin{enumerate}
        \item the sequence is defined on a common domain $\Omega \supseteq \Gamma$;
        \item the sequence is nested ($\Sspace_{\ect,0}\subseteq \dots \subseteq \Sspace_{\ect,N-1}$).
   \end{enumerate}
\end{assumption}

\begin{corollary} \label{cor:Tchebycheffian hierachichal complete}
    The Tchebycheffian function set $\Hbasis_{\ect,\Gamma}$ is defined by applying the selection mechanism from Definition~\ref{def:selection} to the Tchebycheffian bases $\{\Bbasis_{\ect,l}\}_{l=0}^{N-1}$ as in \eqref{eq:hierarchical-basis}.
    Under Assumption~\ref{ass:basis Tchebycheffian hierachichal setting},  we have $\Span\Hbasis_{\ect,\Gamma} \subseteq \Hspace_{\ect}(\Gamma)$ and $\Hbasis_{\ect,\Gamma}$ is linearly independent.
    Additionally, if Assumption~\ref{ass:connectivity} is satisfied for each level, the set $\Hbasis_{\ect,\Gamma}$ is complete, i.e., $\Span\Hbasis_{\ect,\Gamma} = \Hspace_{\ect}(\Gamma)$.
\end{corollary}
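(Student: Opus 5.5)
The plan is to show that the Tchebycheffian setting satisfies every hypothesis of Theorems~\ref{thm:linind} and~\ref{thm:hier_complete}. Both conclusions then follow directly from those theorems. In other words, I will check that Assumption~\ref{ass:basis Tchebycheffian hierachichal setting}, together with the structural results of this section, implies Assumption~\ref{ass:basis hierachichal setting} for the sequence $\{\Sspace_{\ect,l}\}_{l=0}^{N-1}$.

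Items 1 and 2 of Assumption~\ref{ass:basis hierachichal setting} (common domain and nestedness) are exactly the two items of Assumption~\ref{ass:basis Tchebycheffian hierachichal setting}.

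For item 3, each level $l$ is built from local ECT-spaces generated by compatible weight systems. Theorem~\ref{thm:ect_assumption_basis} therefore gives, in each direction, a univariate Tchebycheffian B-spline-like basis satisfying Assumption~\ref{ass:basis}. I then check that the tensor product $\Bbasis_{\ect,l}$ inherits both properties.
\begin{itemize}
\item Property~\ref{ass:basis P2}: the support of a tensor product is the Cartesian product of the univariate supports, each a closed bounded interval.
\item Property~\ref{ass:basis P1}: on a cell $c=I^1_{i_1}\times\dots\times I^d_{i_d}$, the tensor products not vanishing on $c$ are exactly the products of univariate functions not vanishing on the respective $I^j_{i_j}$. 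Tensor products of linearly independent families are linearly independent.
\end{itemize}

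For item 4, Theorem~\ref{thm:ect_assumption_verification} gives Assumption~\ref{ass:extension} for each $\Sspace_{\ect,l}$, since all local spaces are tensor products of ECT-spaces.

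With Assumption~\ref{ass:basis hierachichal setting} established, Theorem~\ref{thm:linind} gives $\Hbasis_{\ect,\Gamma}\subseteq\Hspace_{\ect}(\Gamma)$. Because $\Hspace_{\ect}(\Gamma)$ is a linear space (the contact conditions are linear), this yields $\Span\Hbasis_{\ect,\Gamma}\subseteq\Hspace_{\ect}(\Gamma)$. The same theorem gives linear independence. If Assumption~\ref{ass:connectivity} holds at each level, Theorem~\ref{thm:hier_complete} gives $\Span\Hbasis_{\ect,\Gamma}=\Hspace_{\ect}(\Gamma)$.

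The only step needing care is the transfer of local linear independence to the tensor-product basis. In particular, one must confirm that a tensor-product function vanishes identically on $c$ exactly when some univariate factor vanishes identically on its interval. This holds because a product $\prod_j f_j(x_j)$ is identically zero on a box iff some factor is identically zero on its side.

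One must also confirm that the ECT structure on each level is consistent with nestedness. This compatibility is, however, exactly what Assumption~\ref{ass:basis Tchebycheffian hierachichal setting} posits, so no further argument is needed.
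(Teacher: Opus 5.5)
Your proposal is correct and takes the same route as the paper. It checks Assumption~\ref{ass:basis hierachichal setting} via Theorem~\ref{thm:ect_assumption_basis} for the basis and Theorem~\ref{thm:ect_assumption_verification} for the extension property, then applies Theorems~\ref{thm:linind} and~\ref{thm:hier_complete}; your explicit checks that properties \ref{ass:basis P1} and \ref{ass:basis P2} pass to the tensor-product basis, and that linearity of the contact conditions turns $\Hbasis_{\ect,\Gamma}\subseteq\Hspace_{\ect}(\Gamma)$ into the span inclusion, only spell out what the paper leaves implicit.
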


\begin{proof}
    The result is a direct consequence of Theorems~\ref{thm:linind} and~\ref{thm:hier_complete} because the Tchebycheffian B-spline basis satisfies Assumption~\ref{ass:basis} (by Theorem~\ref{thm:ect_assumption_basis}) and the local tensor-product spaces satisfy Assumption~\ref{ass:extension} (by Theorem~\ref{thm:ect_assumption_verification}).
\end{proof}

\begin{remark}
If Assumption~\ref{ass:connectivity} fails, the setting introduced above allows us to construct verbatim the Tchebycheffian decoupled hierarchical basis $\Hbasis_{\ect,\Gamma}^{\dec}$ using the bases $\Dbasis_{\ect}(\Gamma_l)$ as in Section \ref{ssec:decoupled basis}. If the decoupling is feasible for each level $l$ (Definition~\ref{def:feas of decoup}), the completeness of the Tchebycheffian decoupled hierarchical basis is a direct consequence of Theorem~\ref{thm:decoupling complete}.
\end{remark}

\section{Conclusion} \label{sec:conclusion}

In this work, we established a unified theoretical framework for generalized hierarchical spline spaces. Our primary contribution is a shift in perspective from the classical \emph{constructive} approach, which relies on a specific basis selection mechanism, to a \emph{descriptive} formulation based on hierarchical contact conditions. This approach allows us to define the spline space as an intrinsic mathematical object, independent of its basis. Within this framework, we introduced the \emph{extension assumption} (Assumption~\ref{ass:extension}) as the key sufficient condition to link these two perspectives. We also identified rules under which the standard hierarchical basis selection is complete, spanning exactly the space defined by the regularity constraints.

To demonstrate the versatility of this theory, we applied it to the class of Tchebycheffian spline spaces. We showed that spaces generated by extended complete Tchebycheff (ECT) systems --- which are of interest in several applications, including isogeometric analysis --- satisfy the abstract requirements of our framework. By verifying the necessary compatibility and extension properties, we established the completeness of hierarchical Tchebycheffian splines, confirming that our generalized theory effectively covers this important non-polynomial setting.

Several open questions and directions for future research emerge from this study.
\begin{enumerate}
    \item \textbf{Homological dimension analysis:} The descriptive formulation introduced here, which characterizes splines via algebraic constraints across interfaces, is particularly well-suited for a homological treatment. 
    This approach promises to provide dimension formulas for complex configurations, where traditional combinatorial arguments become intractable; see \cite{mourrain_dimension_2014} and \cite{bracco_generalized_2016,bracco_dimension_2016,bracco_tchebycheffian_2019}.
    
    Future work could explore the use of homological algebra to study the dimension of hierarchical \emph{polynomial}  spline spaces.
    As a first step, one could develop a double complex encoding both cell-level contact conditions and hierarchical contact conditions in the polynomial setting, where the ring structure provides the necessary algebraic tools.
    
    The homological approach has been extended to Tchebycheffian spline spaces with uniform ECT-sections in \cite{bracco_dimension_2016,bracco_tchebycheffian_2019}, replacing the missing ring structure by  the so-called $\vd$-sum property for the (single) ECT-space.
    Extending the homological approach to (hierarchical) spline spaces whose sections belong to different ECT-spaces --- precisely the setting of the present work --- requires an appropriate generalization of this property.
    	
    \item \textbf{Truncated hierarchical B-splines (THB-splines):} Extending this generalized framework to THB-splines is a natural next step. Adapting the truncation mechanism to generalized spaces would combine the theoretical rigor of our completeness results with the superior stability and partition of unity properties of the truncated hierarchical basis relevant for several applications like isogeometric analysis; see \cite{giannelli_thb_2012,giannelli_strongly_2014}.

    \item \textbf{More general splines:} The term \emph{generalized splines} has been often used in the literature, with different meanings. Sometimes with a more restrictive meaning with respect to the definition adopted in the current work; see, e.g., \cite{manni_survey_2017}. There also exist definitions of generalized splines resulting in larger classes of functions. Among the others,  in \cite[Chapter~11]{schumaker_spline_2007}, a further generalization is obtained by considering regularity conditions identified by general differential operators, instead of classical derivatives. Extending our results to this more general framework seems to be affordable.
\end{enumerate}

\section*{Acknowledgements}
C.~Manni and H.~Speleers are members of the research group GNCS (Gruppo Nazionale per il Calcolo Scientifico) of INdAM (Istituto Nazionale di Alta Matematica).
They acknowledge support from the MUR Excellence Department Project MatMod@TOV (CUP E83C23000330006) awarded to the Department of Mathematics of the University of Rome Tor Vergata.
They are also partially supported by
a Project of Relevant National Interest (PRIN) under the National Recovery and Resilience Plan (PNRR) funded by the European Union --- Next Generation EU (CUP E53D23017910001) and
by the Italian Research Center in High Performance Computing, Big Data and Quantum Computing (CUP E83C22003230001).

\bibliographystyle{plain}
\bibliography{completeness}

\end{document}